\documentclass[10pt]{amsart}
\usepackage{amsfonts,amsmath,amsthm,amssymb,enumerate,color,mathrsfs}
\usepackage[makeroom]{cancel}
\usepackage[color=blue!70!cyan!10!white]{todonotes}

\textheight 205 true mm \textwidth  150 true mm \oddsidemargin
2.5true mm \evensidemargin 2.5 true mm



\numberwithin{equation}{section}
\newcommand{\beq}{\begin{equation}}
\newcommand{\eeq}{\end{equation}}
\newcommand{\bea}{\begin{eqnarray}}
\newcommand{\eea}{\end{eqnarray}}
\newcommand{\beas}{\begin{eqnarray*}}
\newcommand{\eeas}{\end{eqnarray*}}

\newtheorem{theorem}{Theorem}[section]

\newtheorem{proposition}[theorem]{Proposition}

\newtheorem{corollary}[theorem]{Corollary}
\newtheorem{lemma}[theorem]{Lemma}
\newtheorem{remark}[theorem]{Remark}
\newtheorem{example}[theorem]{Example}
\newtheorem{examples}[theorem]{Examples}
\newtheorem{foo}[theorem]{Remarks}

%
%



\newcommand{\calE}{\mathcal{E}}

\newcommand{\Ho}{\mathcal H}
\newcommand{\V}{\mathcal V}
\newcommand{\ch}{\mathcal H}
\newcommand{\calH}{\mathcal H}
\newcommand{\calV}{\mathcal V}


\DeclareMathOperator{\spn}{span}

\newcommand{\ve}{\varepsilon}

\DeclareMathOperator{\tr}{tr}




\newcommand{\M}{\mathbb M}

\DeclareMathOperator{\Sec}{Sec}

\newcommand{\hptr}{/ \! \hat{/}}



\newcommand{\Sas}{\mathrm{Sas}}
\newcommand{\Rie}{\mathrm{Rie}}

\title[Variations of the sub-Riemannian distance on Sasakian manifolds]{Variations of the sub-Riemannian distance on Sasakian manifolds with applications to coupling}

\author[Baudoin, Grong, Neel, Thalmaier]{Fabrice Baudoin, Erlend Grong, Robert Neel, and Anton Thalmaier}

\address{
Fabrice Baudoin, Department of Mathematics, University of Connecticut,
   341 Mansfield Road, Storrs, CT  06269-1009, USA}
\email{fabrice.baudoin@uconn.edu}

\address{
Erlend Grong, University of Bergen, Department of Mathematics, P.O. Box 7803, 5020 Bergen, Norway}
\email{erlend.grong@uib.no}

\address{
Robert Neel, Department of Mathematics, Lehigh University, Bethlehem, PA, USA}
\email{robert.neel@lehigh.edu}

\address{
Anton Thalmaier, Mathematics Research Unit, University of Luxembourg, L-4364 Esch-sur-Alzette, Luxembourg}
\email{anton.thalmaier@uni.lu}

\thanks{The first author supported in part by NSF Grant DMS 1901315.}
\thanks{The second author supported by project GeoProCo from the Trond Mohn Foundation - Grant TMS2021STG02.}
\thanks{This third author was partially supported by a grant from the Simons Foundation (\#524713 to Robert Neel)}
\thanks{The fourth author supported by FNR Luxembourg: OPEN project GEOMREV O14/7628746.}

\subjclass[2010]{60D05, 53C17, 58J65}
\keywords{Sasakian contact manifolds, sub-Riemannnian geometry, sub-Laplacian, parallel and mirror coupling}

\begin{document}

\maketitle

\begin{abstract}
On Sasakian manifolds with their naturally occurring sub-Riemannian structure, we consider parallel and mirror maps along geodesics of a taming Riemannian metric. We show that these transport maps have well-defined limits outside the sub-Riemannian cut-locus. Such maps are not related to parallel transport with respect to any connection. We use this map to obtain bounds on the second derivative of the sub-Riemannian distance. As an application, we get some preliminary result on couplings of sub-Riemannian Brownian motions.
\end{abstract}

\section{Introduction}
Studying couplings on Riemannian manifolds has been a successful method of obtaining functional inequalities related to local geometric invariants, see, for example, \cite{WangBook} and the references therein. A key preliminary step is to first establish comparison results, in particular related to derivatives of the Riemannian distance. There has been much interest in finding functional inequalities involving the sub-Laplacian which depends on sub-Riemannian geometric identities; see \cite{ABR2,BR2,Bau,BB,BBG,BG,BK,BKW,GT1,GT2,GT3} for examples of progress in this direction. Recall that a sub-Riemannian manifold is a triple $(\M, \calH, g_{\calH})$, where $\calH$ is a bracket-generating subbundle of the tangent bundle $T\M$ of the manifold $\M$ and $g_{\calH}$ is a smoothly varying inner product on $\calH$. Such manifolds are related to second-order operators with a positive semi-definite symbol $\Delta_{\calH}$ called a sub-Laplacian. These operators are not elliptic, but both the sub-Laplacian and its heat operator will be hypoelliptic by H\"ormander's classical result in \cite{Hormander}. The interest in such results comes not only from the applications to hypoelliptic PDEs but also in deepening the understanding of sub-Riemannian geometry itself. Recently, there have been several comparison results using curvature bounds to control the Hessian and sub-Laplacian of the sub-Riemannian distance function; see \cite{AL1,AL2,Lee,LL,BGKT,BGMR19} for examples, with an application found in \cite{BGKNT}. We want to emphasize the previous results found in \cite{BGKT,BGMR19} obtain Hessian comparison theorems by considering the index forms and Jacobi fields of a model taming Riemannian metric $g_\ve$ that approaches a sub-Riemannian metric as $\ve \to 0$. We will expand the application of this method in the current paper.

The aim of this paper is to pave the way for constructing couplings of processes whose infinitesimal generator is a sub-Laplacian. Such couplings have already been studied in \cite{BonnJuil} for the case of the Heisenberg group (see also \cite{BGM} for non-Markovian couplings on the Heisenberg group). We will begin with the study of sub-Riemannian manifolds that can be obtained from Sasakian contact manifolds, where the horizontal bundle $\calH$ is the kernel of the contact form. We consider a taming Riemannian metric $g_\ve$ that converge to the sub-Riemannian metric $(\calH, g_{\calH})$ defined such that the Reeb vector field remains orthogonal to $\calH$. We then study derivatives of the Riemannian distance function $d_{\ve}(\gamma_{1,\ve}(t), \gamma_{2,\ve}(t))$, where $\gamma_{1,\ve}(t)$ and $\gamma_{2,\ve}(t)$ are two different $g_\ve$-geodesics. We construct a parallel and mirror coupling between such choices of geodesics, and show that these coupling maps have a well defined limit as we let the Riemannian geodesics approach their sub-Riemannian counterpart. We note however that the limiting parallel map is not the parallel transport of any affine connection. Using curvature bounds, we are able to control the first and second derivative $d_{\ve}(\gamma_{1,\ve}(t), \gamma_{2,\ve}(t))$ such that we have a well defined limits as $\ve\to 0$, giving a sub-Riemannian result. It is a basic feature of the subject that such derivatives of $d_{\ve}(\gamma_{1,\ve}(t), \gamma_{2,\ve}(t))$ are only well defined at points that are not in the cut locus. We restrict our attention to this situation in the present paper, since it forms the geometric foundation to analyze coupled Brownian motions and is also of independent interest. The extension to globally defined couplings and their applications will be the subject of future research.

The structure of the paper is as follows. In Section~\ref{sec:Preliminaries} we consider the necessary preliminaries of the paper, such as index forms on Riemannian manifolds, sub-Riemannian geometry, and cut loci of both sub-Riemannian and Riemannian manifolds. We will also describe how sub-Riemannian manifolds can be considered as a limit of Riemannian manifolds. In Section~\ref{sec:MainSection}, we consider our main results on sub-Riemannian Sasakian manifolds. We define parallel and mirror maps in Section~\ref{Sect:ParallelAndMirror} that preserves both the Riemannian structure $g_\ve$ as well as the horizontal and vertical bundle, and we show that these have a well-defined limits. We will use these maps to get a short-time expansion of the sub-Riemannian distance along coupled geodesics in Theorem~\ref{theorem:Expansion}. We finally use these results in Section~\ref{sec:Coupling} to get some bounds for the distances between coupled sub-Riemannian Brownian motions that hold up to the cut locus.


\section{Preliminaries} \label{sec:Preliminaries}
\subsection{Variation of Riemannian distance and the index form}
Let $(\M,g)$ be a Riemannian manifold with corresponding Riemannian distance $d_{g}$. For any $x \in \M$, define $r_{g,x}(y) = d_g(x,y)$. The cut locus $\mathbf{Cut}_g (x)$  is defined such that $ y \in \M \setminus \mathbf{Cut}_g(x)$ if there exists a unique, non-conjugate, length-minimizing geodesic from~$x$ to~$y$ relative to~$g$. The global cut-locus of~$\M$ is defined by
\[
\mathbf{Cut}_g (\M)=\left\{ (x,y) \in \M \times \M,\  y \in \mathbf{Cut}_g (x) \right\},
\]
and note that it is symmetric (that is, $(x,y)\in\mathbf{Cut}_g (\M)$ if and only if $(y,x) \in\mathbf{Cut}_g (\M)$).
\begin{lemma}[\cite{A}, \cite{RT}]\label{cutlocus}
The following statements hold:
 \begin{enumerate}[\rm (a)]
\item The set $\M \setminus \mathbf{Cut}_g (x)$ is open and dense in $\M$ for any $x \in M$.
\item The function $(x,y) \to d_g (x,y)^2$ is smooth on $\M \times \M \setminus \mathbf{Cut}_g (\M)$.
\end{enumerate}
\end{lemma}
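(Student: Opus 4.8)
The plan is to reduce everything to properties of the Riemannian exponential map $\exp_x \colon T_x\M \to \M$, using the standard characterization of the cut locus: assuming minimizers exist (e.g.\ $\M$ complete, or restricting to the subset where they do), a point $y$ lies in $\mathbf{Cut}_g(x)$ precisely when either there are at least two distinct minimizing geodesics from $x$ to $y$, or the unique minimizing geodesic reaches $y$ at a point conjugate to $x$, i.e.\ $\exp_x$ is singular at the corresponding $v = \exp_x^{-1}(y)$. The three ingredients I would set up first are: (i) conjugate points along a geodesic are exactly the critical points of $\exp_x$; (ii) a compactness principle --- if $y_n \to y$ and $\gamma_n$ are minimizing geodesics from $x$ to $y_n$ with initial velocities $v_n$, then $|v_n| = d_g(x,y_n)$ stays bounded and every subsequential limit $v$ of $(v_n)$ is the initial velocity of a minimizing geodesic from $x$ to $\exp_x(v) = y$; and (iii) the first-variation identity $\nabla r_{g,x}(y) = \dot\gamma(d_g(x,y))$ at points carrying a unique minimizer $\gamma$.

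For openness in (a), I would argue by contradiction. Fix $y_0 \notin \mathbf{Cut}_g(x)$ with unique non-conjugate minimizer of initial velocity $v_0$, and suppose $y_n \to y_0$ with $y_n \in \mathbf{Cut}_g(x)$. By (ii) the minimizing velocities converge, along a subsequence, to $v_0$. If each $y_n$ carried two distinct minimizers with velocities $v_n \ne w_n$, both would converge to $v_0$; but non-conjugacy of $y_0$ makes $\exp_x$ a local diffeomorphism near $v_0$, contradicting $\exp_x(v_n) = \exp_x(w_n)$ with $v_n \ne w_n$. If instead each $y_n$ were a minimizing conjugate point, then $\det d\exp_x|_{v_n} = 0$, and passing to the limit gives $\det d\exp_x|_{v_0} = 0$, contradicting non-conjugacy of $y_0$. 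Hence an entire neighborhood of $y_0$ avoids the cut locus. For density I would show $\mathbf{Cut}_g(x)$ has measure zero. It splits as $A \cup B$, where $A$ is the set of points with at least two minimizers and $B$ the set of minimizing conjugate points. At any $y \in A$ the two minimizers give distinct end-tangents, so by (iii) the Lipschitz function $r_{g,x}$ fails to be differentiable at $y$; by Rademacher's theorem $A$ is null. The set $B$ lies in the critical values of $\exp_x$, which is null by Sard's theorem. Thus $\mathbf{Cut}_g(x)$ has measure zero and its complement is dense.

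For the smoothness in (b), I would introduce the two-point exponential $E \colon T\M \to \M \times \M$, $E(x,v) = (x, \exp_x v)$, which is smooth. Splitting $T_{(x,v)}(T\M)$ into horizontal and vertical parts, the differential $dE_{(x,v)}$ is block lower-triangular with diagonal blocks $\mathrm{Id}$ and $d\exp_x|_v$, so $\det dE_{(x,v)} = \det d\exp_x|_v$, which is nonzero exactly when $y = \exp_x v$ is non-conjugate to $x$. Fixing $(x_0,y_0) \notin \mathbf{Cut}_g(\M)$ with minimizing velocity $v_0$, the inverse function theorem makes $E$ a diffeomorphism from a neighborhood of $(x_0,v_0)$ onto a neighborhood $\mathcal W$ of $(x_0,y_0)$, producing a smooth branch $(x,y) \mapsto V(x,y)$ with $\exp_x(V(x,y)) = y$. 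Shrinking $\mathcal W$ and combining (ii) with uniqueness at $(x_0,y_0)$, the initial velocity of the actual minimizer equals $V(x,y)$ throughout $\mathcal W$, whence $d_g(x,y)^2 = g_x(V(x,y), V(x,y))$ is smooth there.

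The main obstacle throughout is the passage from the local diffeomorphism, which only yields a smooth \emph{candidate} geodesic, to the genuinely \emph{minimizing} one: identifying the correct branch is exactly where the compactness and uniqueness principle (ii) does the real work, and it is the step demanding the most care, both in the openness argument and in pinning down which branch of $E^{-1}$ computes the distance.
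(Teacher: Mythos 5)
Your argument is essentially correct, but note that the paper does not prove this lemma at all: it is imported from the cited references (Agrachev; Rifford--Tr\'elat), which establish these facts in a framework general enough to also cover the sub-Riemannian cut locus invoked later in Section~2.2. What you have written is the standard self-contained Riemannian proof, and its main ingredients are sound: the dichotomy (two minimizers) versus (conjugate minimizer), compactness of minimizing initial velocities, the openness-by-contradiction argument, Rademacher plus Sard for density via measure zero, and the two-point exponential $E(x,v)=(x,\exp_x v)$ with block-triangular differential for smoothness of $d_g^2$. Two points deserve explicit care. First, your compactness principle and the very identity $\mathbf{Cut}_g(x)=A\cup B$ require that minimizers exist and that limits of minimizing geodesics are defined on the whole parameter interval; you flag this, and it is harmless here since the paper assumes completeness where the lemma is actually used (Lemma~\ref{lemma:Converge}), but it should be stated as a standing hypothesis rather than parenthetically. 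Second, for part~(b) you need the two-variable version of your compactness principle (base points $x_n\to x_0$ as well as endpoints $y_n\to y_0$) to identify the minimizing branch of $E^{-1}$ on a full neighborhood in $\M\times\M$; your statement~(ii) fixes $x$, so this extension should be made explicit. Finally, be aware that your proof buys less than the citations do: it relies on $\exp_x$ being defined on $T_x\M$ and a local diffeomorphism away from conjugate points, which fails in the sub-Riemannian setting, so it does not by itself justify the paper's later remark that the lemma also holds for $\mathbf{Cut}_0$.
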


Let $\gamma:[0,r] \to \M$ be a geodesic of $g$, parametrized by arc length. For vector fields along~$\gamma$, we define a symmetric map $I_{g} = I_{g,\gamma}: (X,Y) \mapsto I_g(X,Y)$ by
$$I_g(Y,Y) = \int_0^r \left( \| \nabla^g_{\dot \gamma} Y(t) \|^2_{g} + \langle R^g(\dot \gamma(t), Y(t)) \dot \gamma(t) , Y(t) \rangle_g \right) dt.$$
where $\nabla^g$ is the Levi-Civita connection of $g$ and $R^g$ is its curvature. The following result can be found in e.g. \cite{Wang}.
\begin{lemma} \label{lemma:DistanceVariation}
Let $\gamma_1$ and $\gamma_2$ be two geodesics with initial velocity
$$\gamma_1'(0) = v \in T_xM, \qquad \gamma_2'(0)= w \in T_y M.$$
Assume that $y$ is not in the cut locus of $x$ and let $\gamma_{x,y}:[0,r] \to M$ be the unique unit speed geodesic from $x$ to $y$ with $r = d_g(x,y)$. Let $Y$ be the Jacobi field along $\gamma_{x,y}$ satisfying $Y(0) = v$ and $Y(r) = w$ and $Y^\perp(t)$ is the projection to the orthogonal complement of $\dot \gamma_{x,y}$. Then
\begin{align*}  \tfrac{d}{dt} d_g( \gamma_1(t), \gamma_2(t) ) |_{t=0} & = \langle w, \dot \gamma_{x,y}(r) \rangle - \langle v, \dot \gamma_{x,y}(0) \rangle; \\
\tfrac{d^2}{dt^2} d_g( \gamma_1(t), \gamma_2(t) ) |_{t=0} & = I_g(Y^\perp,Y^\perp) = I_g(Y,Y) - \frac{1}{r} \left(\langle w, \dot \gamma_{x,y}(r) \rangle_g - \langle v, \dot \gamma_{x,y}(0) \rangle_g \right)^2 .
\end{align*}
\end{lemma}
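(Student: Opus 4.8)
The plan is to set up a two-parameter variation of geodesics and apply the first and second variation formulas for arc length. Define $\Gamma(s,t)$ to be the family of unit-speed minimizing geodesics from $\gamma_1(t)$ to $\gamma_2(t)$, parametrized by $s \in [0, r_t]$ where $r_t = d_g(\gamma_1(t), \gamma_2(t))$. Since $y$ is not in the cut locus of $x$, Lemma~\ref{cutlocus} guarantees that $d_g^2$ is smooth near $(x,y)$, so for small $t$ this family is smooth and the endpoint geodesic $\gamma_{x,y}$ is the $t=0$ member. First I would compute the variation field $V(s) = \p_t \Gamma(s,t)|_{t=0}$ along $\gamma_{x,y}$; by construction $V(0) = \gamma_1'(0) = v$ and $V(r) = \gamma_2'(0) = w$. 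The crucial observation is that because $\Gamma(\cdot,t)$ is a geodesic for each $t$, the variation field $V$ satisfies the Jacobi equation, and with these boundary values it is precisely the Jacobi field $Y$ of the statement.

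Next I would apply the first variation of arc length. For a unit-speed geodesic with variation field $V$, the standard formula gives $\tfrac{d}{dt} r_t |_{t=0} = \langle V, \dot\gamma_{x,y} \rangle \big|_0^r = \langle w, \dot\gamma_{x,y}(r)\rangle - \langle v, \dot\gamma_{x,y}(0)\rangle$, since the interior term involving $\nabla^g_{\dot\gamma}\dot\gamma$ vanishes for a geodesic. This is exactly the first displayed identity. For the second derivative I would invoke the second variation formula. For a geodesic variation the second variation of arc length splits into the index-form contribution from the component of $Y$ orthogonal to $\dot\gamma_{x,y}$ plus a boundary term; the key algebraic point is that the tangential component of $Y$ contributes only through the endpoint inner products, which is where the correction term $\tfrac{1}{r}(\langle w, \dot\gamma_{x,y}(r)\rangle - \langle v, \dot\gamma_{x,y}(0)\rangle)^2$ comes from. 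Decomposing $Y = Y^\top + Y^\perp$ with $Y^\top = \langle Y, \dot\gamma_{x,y}\rangle \dot\gamma_{x,y}$, one checks that $I_g(Y^\top, Y^\perp) = 0$ and that $I_g(Y^\top, Y^\top)$ evaluates to the squared boundary term divided by $r$, yielding $I_g(Y^\perp, Y^\perp) = I_g(Y,Y) - \tfrac{1}{r}(\cdots)^2$.

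The main obstacle, and the step requiring the most care, is justifying that the second derivative of $r_t$ equals the index form of the orthogonal Jacobi field rather than some more general bilinear expression. This rests on two facts: that the variation field is genuinely a Jacobi field (not merely any vector field along $\gamma_{x,y}$), which follows from differentiating the geodesic equation in $t$; and that for a Jacobi field the index form reduces to boundary terms via integration by parts, $I_g(Y,Y) = \langle \nabla^g_{\dot\gamma} Y, Y\rangle \big|_0^r$, since $\nabla^g_{\dot\gamma}\nabla^g_{\dot\gamma} Y + R^g(\dot\gamma, Y)\dot\gamma = 0$. One must also confirm that $\gamma_{x,y}$ is free of conjugate points on $[0,r]$ — again supplied by the hypothesis that $y \notin \mathbf{Cut}_g(x)$ — so that the Jacobi field with the prescribed boundary values exists and is unique, making the second variation formula applicable without degeneracy. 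The remaining computations are the routine linear-algebra verification of the $Y^\top$ decomposition and can be carried out directly.
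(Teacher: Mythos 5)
The paper gives no proof of this lemma; it simply cites \cite{Wang}, so there is nothing internal to compare against. Your argument is the standard first/second variation of arc length proof that such references use, and it is essentially correct: the variation through minimizing geodesics is smooth by Lemma~\ref{cutlocus}, its variation field is the Jacobi field $Y$, the first variation formula gives the boundary terms, the second variation formula gives $I_g(Y^\perp,Y^\perp)$ because the transverse curves $\gamma_1,\gamma_2$ are geodesics (so the acceleration boundary term vanishes), and the identity $I_g(Y^\perp,Y^\perp)=I_g(Y,Y)-\tfrac1r(\langle w,\dot\gamma_{x,y}(r)\rangle-\langle v,\dot\gamma_{x,y}(0)\rangle)^2$ follows from $I_g(Y^\top,Y^\perp)=0$ together with the fact that $\langle Y,\dot\gamma_{x,y}\rangle$ is affine in $s$ for a Jacobi field. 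One detail to repair in the write-up: you cannot simultaneously parametrize each $\Gamma(\cdot,t)$ by unit speed on the $t$-dependent interval $[0,r_t]$ \emph{and} claim $V(r)=\p_t\Gamma(r,t)|_{t=0}=w$; with that parametrization the chain rule gives $V(r)=w-\dot r_0\,\dot\gamma_{x,y}(r)$. The clean fix is to parametrize on the fixed interval $[0,r]$ with constant (non-unit) speed, so that $\Gamma(0,t)=\gamma_1(t)$ and $\Gamma(r,t)=\gamma_2(t)$ hold identically and the boundary values of $V$ are genuinely $v$ and $w$; alternatively one notes that the discrepancy is tangential to $\dot\gamma_{x,y}$ and therefore does not affect $Y^\perp$. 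Either way the stated conclusion is unharmed, but as written the sentence ``$V(r)=\gamma_2'(0)=w$ by construction'' is not quite compatible with your chosen parametrization.
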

This result gives us a way of computing derivatives of the distance if we know the Jacobi fields along the curve. We can also estimate the derivate using the following Lemma, see e.g.~\cite[Corollary~6.2]{Lee97} and \cite[Theorem 1.1.11]{Wang}.
\begin{lemma}[Index Lemma] \label{lemma:Index}
Let $\gamma: [0,r] \to \M$ be a minimizing geodesic and let $I_g = I_{g,\gamma}$ be its index form. Let $Y$ be a Jacobi field and let $X$ be any vector field along $\gamma$ with $X(0) = Y(0)$ and $Y(r) = X(r)$. Then
$$I_g(Y,Y) \leq I_g(X,X).$$
\end{lemma}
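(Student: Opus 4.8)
The plan is to exploit the symmetric bilinear structure of the index form together with the Jacobi equation, reducing the inequality to the positivity of $I_g$ on fields vanishing at both endpoints. Setting $Z = X - Y$, the boundary conditions $X(0) = Y(0)$ and $X(r) = Y(r)$ give $Z(0) = Z(r) = 0$. By bilinearity and symmetry I would expand
$$I_g(X,X) = I_g(Y,Y) + 2 I_g(Y,Z) + I_g(Z,Z),$$
so it suffices to prove two claims: that $I_g(Y,Z) = 0$, and that $I_g(Z,Z) \geq 0$.

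For the first claim I would integrate by parts in $t$. Polarizing the definition of $I_g$ gives
$$I_g(Y,Z) = \int_0^r \left( \langle \nabla^g_{\dot\gamma} Y, \nabla^g_{\dot\gamma} Z \rangle_g + \langle R^g(\dot\gamma, Y)\dot\gamma, Z \rangle_g \right) dt,$$
and integrating the first summand by parts produces a boundary term $\langle \nabla^g_{\dot\gamma} Y, Z\rangle_g \big|_0^r$, which vanishes since $Z$ is zero at both endpoints. What remains is $-\int_0^r \langle \nabla^g_{\dot\gamma}\nabla^g_{\dot\gamma} Y - R^g(\dot\gamma, Y)\dot\gamma, Z\rangle_g \, dt$, and this vanishes identically because $Y$ is a Jacobi field, hence a solution of the Euler--Lagrange equation $\nabla^g_{\dot\gamma}\nabla^g_{\dot\gamma} Y = R^g(\dot\gamma, Y)\dot\gamma$ associated with this index form. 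One must check here that the sign convention in the given definition of $I_g$ is consistent with the Jacobi equation being its critical-point equation; this is a bookkeeping check on curvature conventions rather than a genuine difficulty.

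The second claim, $I_g(Z,Z) \geq 0$, is where the hypothesis that $\gamma$ is minimizing enters, and it is the main point of the argument. Since $\gamma$ is length-minimizing it has no conjugate points in the open interval $(0,r)$, and consequently the index form is positive semi-definite on the space of vector fields along $\gamma$ vanishing at both endpoints; equivalently, the second variation of arc length (or energy) for any proper variation of $\gamma$ fixing its endpoints is non-negative. Applying this to $Z$ gives $I_g(Z,Z) \geq 0$, and combining with $I_g(Y,Z) = 0$ in the expansion above yields $I_g(X,X) = I_g(Y,Y) + I_g(Z,Z) \geq I_g(Y,Y)$, as desired. The main obstacle is establishing this positivity rigorously: the cleanest route is to invoke that a minimizing geodesic realizes a non-negative second variation of the length functional among endpoint-fixing variations, which is precisely the positive semi-definiteness of $I_g$ on proper fields.
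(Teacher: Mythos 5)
Your proof is correct. The paper does not prove this lemma itself but cites \cite[Corollary 6.2]{Lee97} and \cite[Theorem 1.1.11]{Wang}, and your argument --- decompose $X = Y + Z$ with $Z$ vanishing at both endpoints, kill the cross term $I_g(Y,Z)$ by integration by parts against the Jacobi equation, and use minimality of $\gamma$ to get $I_g(Z,Z)\ge 0$ from the non-negativity of the second variation on proper variations --- is precisely the standard proof found in such references; the sign bookkeeping you flag does work out, since with the paper's convention $+\langle R^g(\dot\gamma,Y)\dot\gamma,Y\rangle_g$ in the integrand the associated Euler--Lagrange equation is $\nabla^g_{\dot\gamma}\nabla^g_{\dot\gamma}Y = R^g(\dot\gamma,Y)\dot\gamma$, i.e.\ the Jacobi equation.
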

We note that $I_g$ can be rewritten using other connections rather than the Levi-Civita connection. Let $\nabla'$ be an arbitrary connection. We say that $\nabla'$ is \emph{compatible with} $g$ if $\nabla' g = 0$. A compatible connection has the same geodesics as the Levi-Civita connection if and only if its torsion $T^\prime$ is \emph{skew-symmetric}, meaning that $\langle T^\prime(u,v), w \rangle_g = -\langle v, T^\prime(u,w) \rangle_g$, $u,v,w \in T\M$. Alternatively, a compatible connection $\nabla^\prime$ is skew-symmetric if and only if \emph{its adjoint connection} $\hat \nabla^\prime$ defined by $\hat \nabla^\prime_X Y = \nabla_X^\prime Y - T^\prime(X,Y)$ is compatible with $g$ as well.

Let $\nabla'$ be a compatible connection with skew symmetric torsion and with curvature $R'$. In this case, we can write
$$I_g(Y,Y) = \int_0^r \left( \langle \nabla'_{\dot \gamma} Y, \hat \nabla'_{\dot \gamma} Y \rangle_{g} + \langle R'(\dot \gamma, Y) \dot \gamma , Y \rangle_g \right) dt.$$
For details, see \cite{BGKT,BGMR19}. In what follows, we will focus on the choice of connection that will preserve a given decomposition of the tangent bundle $T\M = \Ho \oplus \V$.

\subsection{Sub-Riemannian manifolds}\label{Sect:sRManifolds}
A sub-Riemannian manifold is a triple $(\M , \Ho, g_\Ho)$, where $\M$ is a connected manifold, $\Ho$ is a subbundle of the tangent bundle $T\M$ and $g_\Ho = \langle \cdot , \cdot \rangle_{\calH}$ is a metric tensor on $\Ho$. The subbundle $\Ho$ is assumed to be \emph{bracket-generating}, meaning that $T\M$ is spanned by sections of $\Ho$ and their iterated brackets. This assumption is a sufficient condition for connectivity of any pair of points by a \emph{horizontal curve}, that is, an absolutely continuous curve $\gamma$ which is tangent to $\Ho$ almost everywhere. For such a curve $\gamma:[0,t_1] \to \M$, we can define its length as $L(\gamma) = \int_0^{t_1} \langle \dot \gamma(t), \dot \gamma(t) \rangle^{1/2}_{\calH} \, dt$. Subsequently, we can define a distance on $\M$,
$$d_0(x, y) = \inf_{\gamma} \Big\{ L(\gamma) \, : \gamma(0) = x, \gamma(t_1) = y, \text{$\gamma$ a horizontal curve} \Big\},$$
which induces the same topology as the manifold topology.

The exponential map on a sub-Riemannian manifold is defined as follows.  Let $\pi: T^*\M \to \M$ denote the canonical projection of the cotangent bundle. From the sub-Riemannian structure $(\calH, g)$, we have a corresponding vector bundle morphism $\sharp_{0}: T^* \M \to \Ho$ uniquely defined by
$$p(v) = \langle \sharp_0 p, v \rangle_\Ho, \qquad \text{for any $p \in T^*_x \M$, $v \in \Ho$, $x \in \M$.}$$
Define a Hamiltonian function $H: T^* \M \to \mathbb{R}$ by $H(p) = \frac{1}{2} \langle \sharp_0 p, \sharp_0 p \rangle_{\Ho}$. Let $\vec{H}$ denote the corresponding Hamiltonian vector field with local flow $t \mapsto e^{t\vec{H}}$. For any $p \in T^*M$, we write
$$\exp_0(tp) = \pi(e^{t\vec{H}}(p)).$$
for any sufficiently small $t$ such that the above expression is well defined. We remark that if $\lambda(t) = e^{t\vec{H}}
(p)$ and $\gamma(t) = \exp_0(tp) = \pi(\lambda(t))$, then $\dot \gamma(t) = \sharp_0 \lambda(t)$ and we have that the speed is constant and equal to $\langle \sharp_0 p, \sharp_0 p\rangle^{1/2}$. We say that $\gamma(t) = \exp_0(tp)$ is \emph{the normal geodesic} with initial covector $p \in T^*M$. Such normal geodesics are always local length minimizers. However, there can be curves that are local length minimizers, but not normal geodesics. Such curves then have to belong to a class called \emph{abnormal curves}; see \cite{Mon02} for details and for further background on sub-Riemannian geometry. 

Relative to the sub-Riemannian structure and the point $x \in\M$, let $\mathbf{Cut}_0(x)$ be the set of points not connected to $x$ by a unique, non-conjugate minimizing curve. We also define $\mathbf{Cut}_0(\M) = \{ (x,y) \, : \, y \in \mathbf{Cut}_0(x)\}$. We note that Lemma~\ref{cutlocus} also holds for the sub-Riemannian cut locus, and that it is again symmetric.

\subsection{Sub-Riemannian manifolds as limits of Riemannian manifolds}
Let $(\M,g)$ be a Riemannian manifold and let $\Ho$ be a subbundle of $T\M$ that is bracket generating. Let $\V = \Ho^\perp$ denote the orthogonal complement and decompose the metric $g$ into a direct sum $g = g_\Ho \oplus g_\V,$
where $g_\Ho$ and $g_\V$ denote the restrictions of $g$ to respectively $\Ho$ and $\V$.
We define the canonical variation $g_\ve$ of $g$ such that for every $\ve > 0$,
\begin{equation} \label{gve} g_\ve = g_\Ho \oplus \frac{1}{\ve} g_\V.\end{equation}
We can then see the sub-Riemannian manifold $(M, \Ho, g_\Ho)$ as the limit as $\ve \downarrow 0$, in the way described below in Lemma~\ref{lemma:Converge}. See \cite[Section~2, Appendix~A]{BGMR19} for details and proof.

Relative to $g_\ve$, let $d_\ve$ be its Riemannian distance and with exponential map $\exp_\ve$. We define this exponential map on the cotangent bundle to the manifold, such that if $\sharp_\ve : T^*\M \to T\M$ is the identification of the cotangent bundle with the tangent bundle using $g_\ve$, then $\gamma(t) = \exp_\ve(tp)$, $p \in T^*\M$, is the $g_\ve$-geodesic with initial vector $\sharp_\ve p$.
\begin{lemma} \label{lemma:Converge} Assume that $(\M, g)$ is a complete manifold. Fix a point $x \in \M$.
\begin{enumerate}[\rm (a)]
\item Let $d_\ve$ be the distance of the metric $g_\ve$. Then $d_\ve \to d_0$ uniformly on compact sets as~$\ve \downarrow 0$. 
\item For $\ve_1 \geq \ve >0$, let $\gamma_\ve: [0,1] \to \M$, $t \mapsto \exp(tp_\ve)$, $p_\ve \in T^*_x\M$ a family of minimizing $g_\ve$-geodesics contained in a compact set with $\lim_{\ve \downarrow 0} \gamma_\ve(1) = y \not\in \mathbf{Cut}_0(x)$. Let $\gamma_0(t) = \exp_0(tp_0)$ be the unique sub-Riemannian geodesic from $x$ to $y$. Then $\gamma_\ve \to \gamma_0$ uniformly and $p_\ve \to p_0$ as~$\ve \downarrow 0$.
\item If $y \not\in \mathbf{Cut}_0(x)$, then there exists a neighborhood $U \ni y$ and an $\ve_2 >0$, such that $U \cap \mathbf{Cut}_\ve(x) = \emptyset$ for $0 \leq \ve < \ve_2$ and the map
$$(\ve, z) \mapsto r_\ve(z) = d_\ve(x,z),$$
is smooth on $[0,\ve_2) \times U$.
\item Let $\nabla f = \sharp_1 df$ denote the gradient of the metric $g$ and let $\nabla_\Ho f$ and $\nabla_\V f$ denote its projection to respectively $\Ho$ and $\V$. Then $\nabla_{\Ho} r_\ve \to \nabla_{\Ho} r_0$ and $\nabla_{\V} r_\ve \to \nabla_{\V} r_0$ as $\ve \downarrow 0$ uniformly on compact sets in $\M \setminus \mathbf{Cut}_0(x)$. In particular, $\| \nabla_{\Ho} r_\ve \|^2 \to 1$ uniformly on compact sets in $\M \setminus \mathbf{Cut}_0(x)$.
\end{enumerate}
\end{lemma}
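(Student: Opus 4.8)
The plan is to treat the four parts in order, with parts (a) and (b) resting on compactness and the vanishing of vertical energy, and parts (c) and (d) following from a parametrized analysis of the exponential map.

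For part (a), the key monotonicity is that $g_\ve \geq g_{\ve'}$ whenever $\ve \leq \ve'$, so $d_\ve$ increases as $\ve \downarrow 0$; meanwhile every horizontal curve has the same length for all $g_\ve$, giving $d_\ve \leq d_0$ for every $\ve > 0$. Hence $d_\ve$ converges pointwise to a limit $\leq d_0$. For the matching lower bound, I would fix $x,y$, take minimizing $g_\ve$-geodesics $\gamma_\ve$ (whose $g_\ve$-lengths, hence $g$-lengths, are bounded), reparametrize by constant speed, and extract a uniformly convergent subsequence by Arzel\`a--Ascoli. Boundedness of the $g_\ve$-length forces $\int_0^1 \frac{1}{\ve}\|\dot\gamma_\ve^\V\|_g^2\,dt$ to stay bounded, so $\int_0^1 \|\dot\gamma_\ve^\V\|_g^2\,dt \to 0$ and the limit curve is horizontal with $g_\Ho$-length $\leq \lim d_\ve(x,y)$; since its length is also $\geq d_0(x,y)$, the limit equals $d_0(x,y)$. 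Finally, because each $d_\ve$ is continuous and the convergence to the continuous limit $d_0$ is monotone, Dini's theorem upgrades pointwise to uniform convergence on compact sets.

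For part (b), the same compactness extraction applies: the $\gamma_\ve$ lie in a fixed compact set with bounded length, so a subsequence converges uniformly, and as above every limit is a horizontal minimizer from $x$ to $y$. Since $y \notin \mathbf{Cut}_0(x)$ this minimizer is unique and equals $\gamma_0$, so the full family converges. For the covectors I would pass to the Hamiltonian picture: writing $g_\ve^{-1} = g_\Ho^{-1} \oplus \ve\, g_\V^{-1}$ gives $\sharp_\ve = \sharp_0 \oplus \ve\,\sharp_\V$ and $H_\ve(p) \to H_0(p)$ locally uniformly in $p$ together with all derivatives, so the flows $e^{t\vec{H}_\ve}$ converge to $e^{t\vec{H}_0}$ with their initial data. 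Boundedness of the horizontal part of $p_\ve$ follows from $2H_\ve(p_\ve) = d_\ve(x,\gamma_\ve(1))^2$; any limit covector $\bar p$ then generates a sub-Riemannian minimizer to $y$, which by uniqueness forces $\bar p = p_0$, giving $p_\ve \to p_0$ along the whole family. \textbf{The main obstacle is here}: the limiting Hamiltonian $H_0$ is degenerate in the vertical cofiber direction, so $H_\ve(p_\ve)$ controls only $p_\ve^\Ho$ and not the vertical part $p_\ve^\V$; controlling $p_\ve^\V$, and hence extracting a convergent subsequence of the full covector, requires the non-degeneracy coming from $y \notin \mathbf{Cut}_0(x)$, i.e.\ that the endpoint determines the covector.

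For part (c), I would study the joint exponential map $E:(\ve,p) \mapsto \exp_\ve(p) = \pi(e^{\vec{H}_\ve}(p))$, which is smooth in $(\ve,p)$ up to and including $\ve = 0$. Non-conjugacy of $\gamma_0$ (part of the hypothesis $y \notin \mathbf{Cut}_0(x)$) means $d_p E(0,\cdot)|_{p_0}: T_{p_0}T_x^*\M \to T_y\M$ is an isomorphism, so the parametrized inverse function theorem yields a smooth inverse $(\ve,z) \mapsto p_\ve(z)$ on a neighborhood $[0,\ve_2) \times U$. That points of $U$ avoid $\mathbf{Cut}_\ve(x)$ for small $\ve$ follows from two facts: non-conjugacy is an open condition holding at $(0,p_0)$ and hence nearby, and a second minimizer to some $z_\ve \to y$ would, by part (b), converge to a second sub-Riemannian minimizer to $y$, contradicting uniqueness. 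Smoothness of $r_\ve$ then follows from $r_\ve(z) = \sqrt{2H_\ve(p_\ve(z))}$, a composition of smooth maps which is nonvanishing for $z \neq x$. Part (d) is then a corollary: joint smoothness of $(\ve,z) \mapsto r_\ve(z)$ on $[0,\ve_2)\times U$ gives $d r_\ve \to d r_0$ together with all $z$-derivatives, uniformly on compact subsets of $U$; applying the fixed identification $\sharp_1$ and the fixed projections onto $\Ho$ and $\V$ yields $\nabla_\Ho r_\ve \to \nabla_\Ho r_0$ and $\nabla_\V r_\ve \to \nabla_\V r_0$, and covering a compact subset of $\M \setminus \mathbf{Cut}_0(x)$ by finitely many such neighborhoods gives the uniform statement. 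The normalization $\|\nabla_\Ho r_\ve\|^2 \to 1$ follows because $\nabla_\Ho r_0 = \sharp_0\, dr_0$ and $r_0$ satisfies the sub-Riemannian eikonal equation $\|\sharp_0\, dr_0\|_\Ho^2 = 2H_0(dr_0) = 1$ away from the cut locus.
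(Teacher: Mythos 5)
First, a remark on the comparison itself: the paper does not actually prove Lemma~\ref{lemma:Converge}; it is quoted from \cite{BGMR19} (``See [Section~2, Appendix~A] for details and proof''), so your proposal has to be judged on its own terms. Its overall architecture --- monotonicity of $\ve\mapsto g_\ve$ plus Arzel\`a--Ascoli and vanishing vertical energy for (a) and the curve half of (b), and the jointly smooth exponential $(\ve,p)\mapsto\exp_\ve(p)$ with a parametrized inverse function theorem at the non-conjugate covector $p_0$ for (c) and (d) --- is the standard one and is essentially what \cite{BGMR19} does; those parts are fine as a sketch.

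The genuine gap is the covector convergence in (b), and you have named it yourself without closing it. You correctly observe that $2H_\ve(p_\ve)=d_\ve(x,\gamma_\ve(1))^2$ controls only $p_\ve^\Ho$ and $\sqrt{\ve}\,p_\ve^\V$, so a priori $\|p_\ve^\V\|$ may blow up like $\ve^{-1/2}$; but the sentence ``controlling $p_\ve^\V$ \dots\ requires the non-degeneracy coming from $y\notin\mathbf{Cut}_0(x)$, i.e.\ that the endpoint determines the covector'' is not an argument. Uniqueness of the limiting covector $p_0$ is useless until you have \emph{precompactness} of the family $\{p_\ve\}$, which is exactly the missing step; and the omission propagates into (c), since identifying the implicit-function-theorem branch $p_\ve(z)$ with the initial covector of an actual $g_\ve$-minimizer (which is how one rules out $U\cap\mathbf{Cut}_\ve(x)\neq\emptyset$) requires knowing that minimizers' covectors converge to $p_0$. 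Two standard ways to close it: (i) on a contact (hence fat) structure there are no nontrivial abnormal minimizers, so if $\|p_{\ve_n}\|\to\infty$ one normalizes $\lambda_n=p_{\ve_n}/\|p_{\ve_n}\|$, notes that the rescaled Hamiltonians tend to $0$, and extracts in the limit a nonzero covector generating an abnormal minimizer from $x$ to $y$ --- a contradiction; or (ii) one uses local semiconcavity of $z\mapsto d_\ve(x,z)^2$ with constants uniform in $\ve$, which bounds the terminal covectors $e^{\vec H_\ve}(p_\ve)$ and hence, running the Hamiltonian flow backwards, the initial ones. One of these ingredients must actually appear. A smaller imprecision in (c): two distinct $g_\ve$-minimizers to $z_\ve$ both converging to $\gamma_0$ is not by itself a contradiction with ``uniqueness''; you need local injectivity of $(\ve,p)\mapsto\exp_\ve(p)$ near $(0,p_0)$, supplied by the inverse function theorem, to force their covectors to coincide for small $\ve$.
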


\subsection{Sasakian manifolds} \label{sec:Sasakian}
Let $\theta$ be a non-vanishing one-form on a connected manifold $\M$ with $\Ho =  \ker \theta$. We call $\theta$ a \emph{contact form} if $d\theta|_{\wedge^2 \Ho}$ is non-degenerate. It follows that $\M$ is odd dimensional, that $\Ho$ has even rank and that it is bracket-generating. The \emph{Reeb vector field} is the unique vector field $Z$ satisfying
$$\theta(Z) = 1, \qquad d\theta(Z, \, \cdot \,) = 0.$$
We define $\V$ as the subbundle spanned by $Z$. There then exists a unique Riemannian metric $g$ and vector bundle map $J: T\M \to T\M$ such that
$$g(Z,X) = \theta(X), \qquad \langle J X, Y \rangle_g = d\theta(X,Y), \qquad J^2 X = - X + \theta(X) Z,$$
for any $X \in T\M$. We emphasize that $\Ho$ and $\V$ are orthogonal under this metric.

On contact manifolds $(\M, \theta, g)$, there is a also preferred choice of connection that preserves the decomposition $T\M = \Ho \oplus \V$, called the Tanno's connection, which was introduced in~\cite{Tanno}. It is the unique connection with torsion $T$ that satisfies:
\begin{enumerate}[\rm (i)]
\item $\nabla\theta=0$;
\item $\nabla Z=0$;
\item $\nabla g=0$;
\item ${T}(X,Y)=d\theta(X,Y)Z$ for any $X,Y\in \Gamma(\mathcal{H})$;
\item ${T}(Z,JX)= - JT(Z, X)$ for any vector field $X\in \Gamma(\mathcal{H})$.
\end{enumerate}
The manifold $(\M, \theta, g)$ is called \emph{$K$-contact} if $T(Z,\, \cdot \,) = 0$. This is equivalent to assuming that the Reeb vector field $Z$ is also Killing. It is called \emph{Sasakian} if it in addition satisfies $\nabla T = 0$.

\begin{remark}
If $(\M, \theta, g)$ is $K$-contact, then the Tanno connection coincides with the Bott connection. That is, if $\pi_{\calH}: T\M \to \calH$ is the orthogonal projection, $\nabla^g$ is the Levi-Civita connection of $g$, $Z$ is the Reeb vector field and $X,Y \in \Gamma(\calH)$, we can then describe $\nabla$ by
$$\nabla_X Y = \pi_{\calH} \nabla_X^g Y, \qquad \nabla_Z X = [Z,X], \qquad \nabla Z=0.$$
See e.g. \cite{BGKT} for details. If $\M$ is a strongly pseudo convex CR manifold with pseudo-Hermitian form $\theta$, then the Tanno's connection is the Tanaka-Webster connection. CR manifolds of K-contact type are Sasakian manifolds (see \cite{Dragomir}).
\end{remark}

If $\nabla$ is our Tanno connection on a Sasakian manifold $(\M, \theta, g)$, then its torsion $T$ is given by
$$T(X,Y) = \langle JX, Y \rangle_g Z.$$
Let $g_\ve$ be the canonical variation of $g$ as defined in \eqref{gve}. The geodesics of $g_\ve$ are in general not $\nabla$-geodesics since its torsion is not skew-symmetric for any $g_\ve$. For this reason, we also consider the connection
$$\hat \nabla_X^\ve Y = \nabla_X Y + \frac{1}{\ve}\theta(X) J Y,$$
with adjoint
$$\nabla^\ve_X Y = \nabla_X Y - \frac{1}{\ve} \theta(Y) JX - T(X,Y).$$
These connections are both compatible with $g_\ve$, and hence have skew-symmetric torsion. However, $\hat \nabla^\ve$ also preserves the decomposition $T\M = \Ho \oplus \V$. It is hence also compatible with $g_{\ve_2}$ for any other $\ve_2 > 0$ as well. Furthermore, if $\dot \gamma$ is the tangent vector of a geodesic of $g_\ve$, not only is this parallel with respect to $\hat \nabla^\ve$, but the same holds for its projection to $\Ho$ and $\V$, denoted by $\dot \gamma_\Ho$ and $\dot \gamma_\V$, respectively.

If $\hat T^\ve$ and $\hat R^\ve$ denote the torsion and the curvature, respectively, of $\hat \nabla^\ve$, then
$$\hat T^\ve(X, Y) = \langle JX, Y \rangle Z + \frac{1}{\ve} (\theta(X) J Y - \theta(Y) J X),$$
while from \cite{BGKT}, we have
\begin{align*}
\hat{R}^\varepsilon (X,Y)W =R(X,Y)W +\frac{1}{\varepsilon} \langle J X, Y \rangle_g JW.
\end{align*}

\begin{remark}\label{Rmk:SasakianCut}
In a Sasakian space, for every non-vanishing horizontal vector field $X$,  $T\M$ is always generated by $[X,\Ho]$ and $\Ho$. Therefore the sub-Riemannian structure on a Sasakian foliation is fat.  All sub-Riemannian geodesics are thus normal. See~\textup{\cite{RS}} for a detailed discussion of such structures. Furthermore, from Corollary 6.1 in \textup{\cite{RT}}, for every $x_0 \in \M$, the distance function $x \to r_0 (x)$ is locally semi-concave in $\M\setminus \{ x_0 \}$, and hence twice differentiable almost everywhere. Also, from Corollary 32 in \textup{\cite{BR2}}, $x \neq x_0$ is in $\mathbf{Cut}_0 (x_0)$ if and only if $r_0$ fails to be semi-convex at $x$. Therefore, $\mathbf{Cut}_0 (x_0)$ has measure 0.
\end{remark}

\section{Parallel and mirror maps on Sasakian manifolds with comparison results} \label{sec:MainSection}
\subsection{Hessian comparison theorem for Sasakian manifold}
We will first state a Hessian comparison theorem on Sasakian manifolds found in \cite{BGKT,BGMR19}, which uses notation needed later in the paper. Let $R$ be the curvature tensor of the Tanno connection $\nabla$. If $\Sec$ denotes the sectional curvature relative to $\nabla$, we introduce a $2$-tensor $\mathbf{K}_{\Ho, J}$ such that for any $w \in \calH \setminus 0$,  
\[
\mathbf{K}_{\mathcal{H},J} (w,w) = \| w\|_g^2 \,  \Sec( \spn\{ w, Jw\}).
\]
The quantity $\mathbf{K}_{\mathcal{H},J}$ is sometimes called the pseudo-Hermitian sectional curvature of the Sasakian manifold, which can be seen as the CR-analog of the holomorphic sectional curvature of a K\"ahler manifold \cite{B}. By removing this sectional curvature from the sum in the Ricci curvature, we define
\[
\mathbf{Ric}_{\mathcal{H},J^\perp} (w,w) =\mathbf{Ric}_\mathcal{H} (w,w)-\mathbf{K}_{\mathcal{H},J} (w,w), \qquad \mathbf{Ric}_\Ho(w,w) = \mathrm{tr}_\Ho \langle R(w, \times) \times, w \rangle_g.
\]
For any $\ve \geq 0$, let $r_{\ve,x}(y) = d_\ve(x,y)$ and consider the subset
\begin{equation} \label{Sigmave} \Sigma_\ve = \M \times \M \setminus \mathbf{Cut}_\ve(\M),\end{equation}
We define functions $h_\ve, v_\ve: \Sigma_\ve \to \mathbb{R}$, by
\begin{equation} \label{hvve} h_\ve(x,y) = \| \nabla_\Ho r_{\ve,x}(y) \|_{\Ho}, \qquad v_\ve(x,y) =  \| \nabla_\V r_{\ve,x}(y) \|_{\V}.\end{equation}
Note that $h_\ve^2 + \ve v_\ve^2 = 1$. Next, for any $k \in \mathbb{R}$, let $\mathfrak{s}_k(t)$ denote the solution of the equation $\ddot y + k y = 0$ with initial condition $y(0) = 0$ and $\dot y(0) =1$. Write $\mathfrak{c}_k(t)$ for its derivative, which satisfies the same ODE with initial conditions $y(0) =1$ and $\dot y(0) =0$. In other words,
$$
\mathfrak{s}_k(t) = \left\{
{\footnotesize \begin{array}{ll}
 \frac{\sin \sqrt{k} t}{\sqrt{k}} & \text{if } k >0, \\ [4pt]
t & \text{if } k =0, \\ \frac{\sinh \sqrt{-k} t}{\sqrt{-k}} & \text{if } k < 0, \end{array} } \right. 
\qquad
\mathfrak{c}_k(t) = \left\{
{\footnotesize \begin{array}{ll}
 \cos \sqrt{k} t & \text{if } k >0, \\ [4pt]
1 & \text{if } k =0, \\ \cosh \sqrt{-k} t & \text{if } k < 0. \end{array}}
\right.$$
We can use these to introduce comparison functions
$$F_{\mathrm{Rie}}(r,k) =  \frac{d}{dr} \log |\mathfrak{s}_k(r)|, \qquad F_{\mathrm{Sas}}(r,k) = \frac{d}{dr} \log \frac{1}{k^2}|2- 2\mathfrak{c}_k(r) - kr\mathfrak{s}_k(r)|,$$
or, in more detail,
\begin{align*}
F_{\mathrm{Rie}}(r,k)  & = { \footnotesize \left\{ \begin{array}{ll} \frac{\sqrt{k} \cos \sqrt{k} r}{\sin \sqrt{k} r} & k> 0, \\ [4pt] \frac{1}{r} & k =0, \\ [4pt] \frac{\sqrt{|k|} \cosh \sqrt{|k|}r}{\sinh \sqrt{|k|} r}  & k < 0, \end{array} \right.} & \qquad
F_{\mathrm{Sas}}(r,k)  &= {\footnotesize \left\{ \begin{array}{ll} \frac{\sqrt{k}(\sin \sqrt{k} r - \sqrt{k} r \cos \sqrt{k} r) }{2 - 2 \cos \sqrt{k} r - \sqrt{k} r \sin \sqrt{k} r} & k > 0, \\ [4pt] \frac{4}{r} & k =0, \\ [4pt] \frac{\sqrt{|k|} ( \sqrt{|k|} r \cosh \sqrt{|k|} r - \sinh \sqrt{|k|} r) }{2 - 2 \cosh \sqrt{|k|} r + \sqrt{|k|} r \sinh \sqrt{|k|} r} & k <0. \end{array}\right.}
\end{align*}
The following result is found in \cite{BGKT,BGMR19} for the Hessian with respect to the Tanno connection~$\nabla$.
\begin{theorem}[Hessian comparison theorem] \label{th:Hessian} Let $(x,y) \in \Sigma_\ve$ be given and write $h = h_\ve(x,y)$, $v= v_\ve(x,y)$ and $r= r_{\ve,x}(y)$. Write $\gamma = \gamma_{\ve,x,y}:[0,r] \to \M$ for the length minimizing geodesic from $x$ to $y$. For constants $k_1$ and $k_2$ to be defined, write
\begin{align} 
\label{Kconstants} & K_1 = K_{1,\ve}(x,y) = k_1 h_\ve(x,y)^2 + v_\ve(x,y)^2, \qquad K_2 = K_{2,\ve}(x,y) = k_2 h_\ve(x,y)^2 + \frac{1}{4} v_\ve(x,y)^2.
\end{align}
We then have the following bounds.
\begin{enumerate}[\rm (a)]
\item If $u = \dot \gamma_{\calH}(r)$, then $\frac{1}{h^2} \nabla^2_{ u, u} r_{\ve,x}(y) \leq \frac{1-h}{r}$. Furthermore, if $\mathbf{K}_{\calH,J}(\dot \gamma_{\calH}(t), \gamma_{\calH}(t)) \geq k_1$, then
$$\frac{1}{h^2} \nabla^2_{J u, J u} r_{\ve,x}(y) \leq F_{\mathrm{Sas}}(r, K_1).$$
\item If $\Sec(\spn\{\dot \gamma_{\calH}(t), v\})  \geq k_2$ for any $v \in \Ho_{\gamma(t)} \setminus 0$, and $w \in \Ho_y$ is a unit vector orthogonal to~$\dot \gamma_{\calH}(t)$ and~$J\dot \gamma_{\calH}(t)$, then
$$\nabla^2_{w,w} r_{\ve,x}(y) \leq F_{\mathrm{Rie}}(r, K_{2}).$$
\end{enumerate}
In particular, if we have global bounds
\begin{equation}
\label{CurvBoundSasakian}
 \mathbf{K}_{\mathcal{H},J}(w,w) \ge  k_1, \qquad \mathbf{Ric}_{\mathcal{H},J^\perp}(w,w) \ge (n-2)k_2, \qquad w \in \ch, \| w\|_g = 1,
\end{equation}
and if $\Delta_{\calH} = \tr_{\calH} \nabla^2_{\times,\times}$ is the sub-Laplacian of $\nabla$, then
$$\Delta_\Ho r_{\ve,x}(y) \leq  F_{\mathrm{Sas}}(r, K_1) + (n-2)  F_{\mathrm{Rie}}(r, K_2).$$
\end{theorem}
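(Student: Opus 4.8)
The plan is to reduce every horizontal entry of the Tanno Hessian of $r_{\ve,x}$ to a second variation of the Riemannian distance $d_\ve$, and then to control that variation through the index form, expressed via the connection $\hat\nabla^\ve$. This connection is the right vehicle because it is simultaneously compatible with $g_\ve$ (hence has skew-symmetric torsion and the same geodesics as the Levi-Civita connection of $g_\ve$) and preserves the splitting $T\M = \Ho \oplus \V$, so that $\dot\gamma_\Ho$, $\dot\gamma_\V$ and $J\dot\gamma_\Ho$ are all $\hat\nabla^\ve$-parallel along a $g_\ve$-geodesic $\gamma$. The first step is the reduction itself: for a horizontal $w$ we have $\theta(w)=0$, and since $\hat\nabla^\ve$ and $\nabla$ differ only by the tensor $\frac{1}{\ve}\theta(\cdot)\,J(\cdot)$, the two Hessians agree on horizontal diagonal entries, $\nabla^2_{w,w} r_{\ve,x} = (\hat\nabla^\ve)^2_{w,w} r_{\ve,x}$. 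As $\hat\nabla^\ve$-geodesics are $g_\ve$-geodesics, this equals $\tfrac{d^2}{dt^2} d_\ve(x,c(t))|_{t=0}$ for the $g_\ve$-geodesic $c$ with $\dot c(0)=w$; applying Lemma~\ref{lemma:DistanceVariation} with the first curve held fixed at $x$ then identifies it with $I_{g_\ve}(Y^\perp,Y^\perp)$, where $Y$ is the Jacobi field along the minimizer $\gamma=\gamma_{\ve,x,y}$ with $Y(0)=0$ and $Y(r)=w$. The first-variation formula of the same lemma also gives $\nabla_\Ho r_{\ve,x}=\dot\gamma_\Ho(r)$, so $h=\|\dot\gamma_\Ho(r)\|_g$, which explains the normalization $\tfrac{1}{h^2}$ in part~(a).

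Next I would invoke the Index Lemma (Lemma~\ref{lemma:Index}): it suffices to produce, for each target $w$, a field $X$ with $X(0)=0$, $X(r)=w$, and to estimate $I_{g_\ve}(X,X)$ through the $\hat\nabla^\ve$-form of the index, using $\hat R^\ve(X,Y)W = R(X,Y)W + \frac{1}{\ve}\langle JX,Y\rangle_g JW$ together with its adjoint $\nabla^\ve$. For the bound on $\nabla^2_{u,u} r_{\ve,x}$ I would take the near-radial field $X(t)=\tfrac{t}{r}\dot\gamma_\Ho(t)$, which yields the clean estimate $\tfrac{1-h}{r}$. For the $Ju$-direction and for part~(b) I would take $X(t)=f(t)E(t)$ with $E$ a unit $\hat\nabla^\ve$-parallel horizontal field (namely $E=J\dot\gamma_\Ho/h$ in the first case, and $E\perp\{\dot\gamma_\Ho,J\dot\gamma_\Ho\}$ in the second) and a scalar profile $f$ with $f(0)=0$, $f(r)=1$. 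The anisotropic term $\frac{1}{\ve}\langle J\dot\gamma,E\rangle_g JW$ of $\hat R^\ve$, paired with the vertical part of $\dot\gamma$ (whose $g_\ve$-length is $\sqrt{\ve}\,v$), is exactly what turns the geometric hypotheses into the constants $K_1=k_1 h^2 + v^2$ and $K_2 = k_2 h^2 + \tfrac14 v^2$; optimizing $f$ over the resulting Jacobi-type equation produces $F_{\mathrm{Sas}}$ and $F_{\mathrm{Rie}}$ respectively. This computation, reproducing \cite{BGKT,BGMR19}, is the main technical obstacle: one must carefully separate $\hat\nabla^\ve_{\dot\gamma}X$ from $\nabla^\ve_{\dot\gamma}X$ and check that the several $\tfrac1\ve$-contributions stay finite and assemble correctly into $K_1,K_2$.

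For the sub-Laplacian I would fix an orthonormal horizontal frame adapted to $\gamma$: $e_1=\dot\gamma_\Ho/h$, $e_2=J\dot\gamma_\Ho/h$, and $e_3,\dots,e_n$ spanning the $g$-orthogonal complement of $\spn\{\dot\gamma_\Ho,J\dot\gamma_\Ho\}$ in $\Ho$. Part~(a) bounds the $e_1$- and $e_2$-entries. The essential new point is that for $e_3,\dots,e_n$ only the Ricci-type bound $\mathbf{Ric}_{\Ho,J^\perp}\ge(n-2)k_2$ is available, not a pointwise sectional bound, so I would not apply part~(b) direction by direction but instead sum the $n-2$ index estimates with a single common profile $f$. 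Because $\langle J\dot\gamma,e_i\rangle_g=0$ for $i\ge 3$, the anisotropic term of $\hat R^\ve$ drops out of these directions, and the curvature integrand collapses to $\sum_{i\ge3}\langle R(\dot\gamma,e_i)\dot\gamma,e_i\rangle$; its purely horizontal part is $h^2\,\mathbf{Ric}_{\Ho,J^\perp}(e_1,e_1)$ and the vertical and mixed Sasakian terms contribute the $\tfrac14 v^2$, so the sum is $\ge (n-2)K_2$. Optimizing the one profile $f$ then gives $\sum_{i\ge3}\nabla^2_{e_i,e_i} r_{\ve,x}\le (n-2)F_{\mathrm{Rie}}(r,K_2)$.

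Adding the three contributions produces $F_{\mathrm{Sas}}(r,K_1)+(n-2)F_{\mathrm{Rie}}(r,K_2)$, and the one point I would treat with care is the bookkeeping of the near-radial entry: $e_1$ naively contributes the nonnegative term $\tfrac{1-h}{r}$, and one must verify that it is absorbed by the comparison (it is controlled by $F_{\mathrm{Rie}}(r,K_2)$ in the admissible range and, since $1-h=O(\ve)$ by $h^2+\ve v^2=1$, vanishes in the sub-Riemannian limit $\ve\downarrow 0$). Checking this absorption rigorously, rather than the individual Hessian estimates which are already in the literature, is where I expect the genuine effort in the sub-Laplacian statement to lie.
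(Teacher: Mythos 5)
The paper does not actually prove Theorem~\ref{th:Hessian} --- it is quoted from \cite{BGKT,BGMR19} --- so the closest in-house material to compare with is the model Jacobi-field computation of Lemmas~\ref{lemma:Jacobi}--\ref{lemma:BoundSas}. Your framework is the correct one and matches those references: for horizontal $w$ the Tanno Hessian $\nabla^2_{w,w}r_{\ve,x}$ agrees with the $\hat\nabla^\ve$-Hessian, hence with a second variation of $d_\ve$, and Lemma~\ref{lemma:DistanceVariation} plus the Index Lemma reduce everything to estimating $I_\ve$ on competitor fields written in the $\hat\nabla^\ve$-parallel frame. The gap is in the competitors themselves: a field $X=f(t)E(t)$ with $E$ a \emph{single} parallel horizontal unit field cannot produce the stated constants. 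In the $Ju$-direction it fails outright: with $E=J\dot\gamma_\Ho/h$ the anisotropic term contributes $+\tfrac{1}{\ve}\langle J\dot\gamma, fE\rangle_g^2=\tfrac{h^2}{\ve}f^2$ to the index integrand (note $\langle J\dot\gamma, J\dot\gamma_\Ho\rangle_g=h^2$ is $O(1)$, not $O(\ve)$ --- it is not tamed by pairing with $\dot\gamma_\V$), so $I_\ve(X,X)$ diverges as $\ve\downarrow 0$ and no bound of the form $F_{\mathrm{Sas}}(r,K_1)$ can emerge. The whole point of the Sasakian comparison function is that this direction must be coupled to the vertical one: the admissible competitor is $\tfrac{a}{h}J\dot\gamma_\Ho-b\left(Z-\tfrac{v}{h^2}\dot\gamma_\Ho\right)$ with $\dot b$ close to $ha$, which is exactly what cancels the $\tfrac{1}{h\ve}(\dot b-ha)$ singularity in the system \eqref{JacobiEqs} and produces $K_1=k_1h^2+v^2$. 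Similarly, in part (b) the uncoupled field $fE$ yields only $F_{\mathrm{Rie}}(r,h^2k_2)$; the improvement to $K_2=h^2k_2+\tfrac14 v^2$ requires the precessing field $z_v(t)u_1/z_v(r)$, $z_v(t)=e^{ivt/2}\mathfrak{s}_{K_2}(t)$, of Lemma~\ref{lemma:Jacobi}, which rotates at rate $v/2$ in the plane $\spn\{E,JE\}$. Finally, for the radial entry the field $\tfrac{t}{r}\dot\gamma_\Ho$ gives $I(X,X)=h^2/r$ and hence only $\tfrac{1}{h^2}\nabla^2_{u,u}r_{\ve,x}\le\tfrac{1-h^2}{r}$, which is strictly weaker than the claimed $\tfrac{1-h}{r}$; that sharper constant again needs the exact coupled model field (the perpendicular part of $u/h$ at $y$ lies in $\spn\{\dot\gamma_\Ho,Z\}$ and interacts with the $J\dot\gamma_\Ho$-block through $\ddot b=h\dot a$).

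In short: the reduction and the $\hat\nabla^\ve$-bookkeeping are right, but each of the three quantitative conclusions requires the coupled or rotating model Jacobi fields of Lemma~\ref{lemma:Jacobi} as competitors; with scalar profiles times a single parallel horizontal field, one bound diverges and the other two come out strictly weaker than stated. You are right to single out the absorption of the radial term $\tfrac{1-h}{r}$ into the sub-Laplacian bound as a genuine issue --- it is not handled by part (b), and in the references it is resolved by treating the block $\spn\{\dot\gamma_\Ho,J\dot\gamma_\Ho,Z\}$ jointly rather than entry by entry --- but your proposed resolution (controlling it by $F_{\mathrm{Rie}}(r,K_2)$) is not how it is done and would need justification.
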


\subsection{Parallel map, mirror map, and their limit on Sasakian manifolds}\label{Sect:ParallelAndMirror}
Let $\Sigma_\ve$ be defined as the complement of $\mathbf{Cut}_\ve(\M)$ in $\M \times \M$ for any $\ve \geq 0$ as in \eqref{Sigmave}. We write $\pi_{1}, \pi_{2}: \Sigma_\ve \to \M$ for the respective projections $\pi_1(x,y) = x$ and $\pi_2(x,y) = y$.
For $\ve >0$, we can define a section $P_\ve \in \Gamma(\pi^*_1 T^* \M \otimes \pi_2^* T\M)$ such that for any $(x,y) \in \Sigma_\ve$, the map $P_\ve (x,y): T_x \M \to T_y \M$ denotes parallel transport along $\gamma_{\ve,x,y}$ with respect to~$\hat \nabla^\ve$. This map gives us a parallel transport that preserve the metric, as well as~$\Ho$ and~$\V$. Observe that for $\gamma = \gamma_{\ve, x,y}$ and $r= r_{\ve,x}(y)$
$$P_\ve(x,y) \dot \gamma_\Ho(0) = \dot \gamma_\Ho(r), \qquad P_\ve(x,y)\dot \gamma_\V(0) = \dot \gamma_\V(r).$$
For $\ve = 0$, we define the linear map $P_0(x,y) : T_x \M \to T_y \M$, $(x,y) \in \Sigma_0$ such that $P_{0}(x,y) w\mapsto X^w(r_{0,x}(y))$ where $X^w$ is the vector field along $\gamma_{0,x,y}$ solving
$$\nabla_{\dot \gamma_{0,x,y}} X^w(t) + v_0(x,y) JX^w(t) = 0, \qquad X^w(0) = w.$$
In particular, for $\gamma = \gamma_{0,x,y}$ and $r = r_{0,x}(y)$, we have
$$P_0(x,y) \dot \gamma(0) = \dot \gamma(r), \qquad P_0(x,y) Z_x = Z_y, \qquad P_0(x,y) J = JP_0(x,y),$$
and $P_0(x,y)$ maps $\ch_x$ onto $\ch_y$ isometrically.

For the construction of the mirror map, define $\Sigma_\ve' = \{ (x,y) \in \Sigma_\ve \, : h_\ve(x,y) >0\}$ for $\ve \geq 0$. Observe that $\Sigma_0 = \Sigma_0'$ since $h_0$ is identically $1$. For $(x,y) \in \Sigma_\ve'$ define $M_\ve(x,y): T_x\M \to T_y \M$ such that $M_\ve(x,y) w = P_\ve(x,y) w$ (resp. $-P_\ve(x,y)w$) if $w$ is orthogonal (resp. parallel) to $(\dot\gamma_{\ve,x,y}(0))_\Ho$.

\begin{lemma} \label{lemma:SigmaPrime}
For any $(x,y) \in \Sigma_0$, there is a neighborhood $U\ni (x,y)$ and some $\ve_2 >0$, such that $U\subseteq \Sigma_\ve'$ for $0 \leq \ve < \ve_2$ and we have $P_\ve \to P_0$ and $M_\ve \to M_0$ uniformly as $\ve \downarrow 0$.
\end{lemma}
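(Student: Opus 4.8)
The statement has two parts: the inclusion $U\subseteq\Sigma_\ve'$, and the uniform convergence of the two maps. For the inclusion, fix $(x_0,y_0)\in\Sigma_0$. Since each $\Sigma_\ve$ is open (Lemma~\ref{cutlocus}), it suffices to rule out points of $\mathbf{Cut}_\ve(\M)$ accumulating at $(x_0,y_0)$ as $\ve\downarrow0$. I would argue by contradiction: given $\ve_n\downarrow0$ and $(x_n,y_n)\to(x_0,y_0)$ with $y_n\in\mathbf{Cut}_{\ve_n}(x_n)$, the distance convergence and the compactness/convergence of minimizing $g_\ve$-geodesics from Lemma~\ref{lemma:Converge}(a)--(b) force the failure of the unique, non-conjugate minimizer property in the limit, i.e.\ $y_0\in\mathbf{Cut}_0(x_0)$, a contradiction. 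This yields a product neighborhood $U\ni(x_0,y_0)$ and $\ve_2>0$ with $U\subseteq\Sigma_\ve$ for $0\le\ve<\ve_2$. To pass from $\Sigma_\ve$ to $\Sigma_\ve'$, I use that $h_0\equiv1$ together with Lemma~\ref{lemma:Converge}(d): after shrinking $U$ to have compact closure, $h_\ve\to1$ uniformly there, so $h_\ve>0$ for small $\ve$ and $U\subseteq\Sigma_\ve'$.

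The substance is the convergence, and the apparent difficulty is that $\hat\nabla^\ve=\nabla+\tfrac1\ve\,\theta(\,\cdot\,)J$ blows up as $\ve\downarrow0$. The resolution is that along a $g_\ve$-geodesic $\gamma=\gamma_{\ve,x,y}$ the vertical momentum $\theta(\dot\gamma)$ is both constant and of order $\ve$. It is constant because $Z$ and $\dot\gamma_\V$ are $\hat\nabla^\ve$-parallel while $JZ=0$, so writing $\dot\gamma_\V=\theta(\dot\gamma)Z$ forces $\tfrac{d}{dt}\theta(\dot\gamma)=0$; and evaluating the endpoint identity $\dot\gamma(r)=\sharp_\ve\,dr_{\ve,x}$ gives $\theta(\dot\gamma)=\ve\,\langle Z,\nabla_\V r_{\ve,x}(y)\rangle$, so that $\tfrac1\ve\theta(\dot\gamma)=\langle Z,\nabla_\V r_{\ve,x}(y)\rangle\to v_0(x,y)$ by Lemma~\ref{lemma:Converge}(d), the sign being consistent with the orientation conventions defining $J$ and $P_0$. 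Thus $P_\ve(x,y)w=V_\ve(r_{\ve,x}(y))$, where $V_\ve$ solves the linear transport equation
$$\nabla_{\dot\gamma_{\ve,x,y}}V_\ve+\Big(\tfrac1\ve\theta(\dot\gamma_{\ve,x,y})\Big)JV_\ve=0,\qquad V_\ve(0)=w,$$
whose coefficients remain bounded and whose limit is exactly $\nabla_{\dot\gamma_{0,x,y}}V_0+v_0\,JV_0=0$, $V_0(0)=w$, the equation defining $P_0(x,y)w$.

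It then remains to invoke continuous dependence of the solution of a linear ODE on its coefficients. Along the curves the Tanno connection and $J$ are smooth; by Lemma~\ref{lemma:Converge}(b) the geodesics $\gamma_{\ve,x,y}$, their initial covectors $p_\ve\to p_0$, and the lengths $r_{\ve,x}(y)\to r_{0,x}(y)$ converge, uniformly for $(x,y)$ in the compact closure of $U$. In a fixed local frame all coefficients of the transport equation therefore converge uniformly, so $V_\ve\to V_0$ uniformly on the parameter interval and, evaluating at the converging endpoints, $P_\ve\to P_0$ uniformly on $U$. For the mirror map I write $M_\ve(x,y)=P_\ve(x,y)\circ S_\ve$, where $S_\ve$ is the reflection of $T_x\M$ across the hyperplane orthogonal to $u_\ve:=(\dot\gamma_{\ve,x,y}(0))_\Ho$. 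Since $u_\ve=\sharp_0 p_\ve\to\sharp_0 p_0=\dot\gamma_{0,x,y}(0)$ with $\|u_0\|=h_0=1\ne0$ on $\Sigma_0'$, the reflection $S_\ve\to S_0$ with no degeneracy, and hence $M_\ve=P_\ve S_\ve\to P_0 S_0=M_0$ uniformly. The main obstacle is precisely the degeneration of $\hat\nabla^\ve$: everything rests on the cancellation making $\tfrac1\ve\theta(\dot\gamma_\ve)$ converge to the finite limit $v_0$. A secondary, purely technical point is that Lemma~\ref{lemma:Converge} is phrased for a fixed base point $x$, so its convergence statements must be made uniform in $x$ over the compact closure of $U$, which I would do by a standard covering argument.
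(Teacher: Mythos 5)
Your proposal is correct and follows essentially the same route as the paper: the neighborhood and $\ve_2$ come from Lemma~\ref{lemma:Converge} plus a compactness/covering argument in the base point, and the heart of the matter is the same cancellation, namely that $\tfrac{1}{\ve}\theta(\dot\gamma_{\ve,x,y})$ equals the constant $v_\ve(x,y)\to v_0(x,y)$, so the $\hat\nabla^\ve$-parallel transport ODE has bounded coefficients converging to the equation defining $P_0$, after which continuous dependence on coefficients gives the uniform convergence (the paper treats $M_\ve$ by its action on a basis rather than as $P_\ve\circ S_\ve$, but this is equivalent).
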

 
\begin{proof}
It is sufficient to complete the proof for $M_\ve$. Let $(x,y) \in \Sigma_0$ be an arbitrary pair of points. From Lemma~\ref{lemma:Converge}, we know that for any $\tilde x \in \M$, there is a relatively compact neighborhood $
\tilde U_{\tilde x}$ of $y$ and a constant $\ve_{\tilde x} > 0$ such that $\tilde U_{\tilde x} \subseteq \M \setminus \mathbf{Cut}_\ve(\tilde x)$ for any $0 \leq \ve_{\tilde x}$ and such that $d_\ve(\tilde x, \, \cdot \,)$ converges uniformly. Let $W$ be a relatively compact neighborhood of $x$. If we define
$$\ve_2 = \min_{\tilde x \in W} \ve_{\tilde x}.$$
then $U = \{ (\tilde x,\tilde y) \, : \,  \tilde x \in W, \tilde y \in U_{\tilde x}\} \subseteq \Sigma_\ve$ for any $0 \leq \ve \leq \ve_2$. Since $h_0(x,y) = 1$, and by possibly shrinking $\ve_2$ and $U_{\tilde x}$, we can assume that $U \subseteq \Sigma'_{\ve}$ for any $0 \leq \ve \leq \ve_2$.

For $0 \leq \ve \leq \ve_2$ and $(\tilde x, \tilde y) \in U$, we define $p_\ve \in T^*_{\tilde x} \M$ such that $\exp_\ve (tp_\ve) = \gamma_{\ve, \tilde x, \tilde y}(t) = \gamma_{\ve,\Ho}(t)$ is a $g_\ve$-minimizing geodesic. We then note that
$$M_\ve: \dot \gamma_{\ve, \Ho}(0) \mapsto -\dot \gamma_{\ve, \Ho}(r_{\ve, x}(y)), \quad  \dot \gamma_{\ve, \Ho}(0) \mapsto J\dot \gamma_{\ve, \Ho}(r_{\ve, x}(y)), \quad Z_{\tilde x} \mapsto Z_{\tilde y}.$$
If $w \in \Ho_{\tilde x}$ is orthogonal to $\spn\{ \dot \gamma_{\ve,\calH}(0), J\dot \gamma_{\ve,\calH}(0) , Z_{\tilde x}\}$, we have that $M_\ve(x,y): w \mapsto X^w_\ve(r_{\ve,x}(y))$, where $X^w_\ve(t)$ is the result of $\hat \nabla^\ve$-parallel transport along $\gamma_{\ve}$. Since $\dot \gamma_{\ve, \V}(t) = \ve v Z(t)$, $v = v_\ve(\tilde x, \tilde y)$, we have that
$$0 = \hat \nabla_{\dot \gamma}^\ve X^w = \nabla_{\dot \gamma} X^w+ \frac{1}{\ve} \theta(\dot \gamma_\ve) J X^w = \nabla_{\dot \gamma} X^w+ v J X^w.$$
The result now follows.
 \end{proof}

\subsection{Jacobi fields and Sasakian models}
In what follows, we need to consider Jacobi fields and approximate solutions to the Jacobi equations. For the rest of the paper, if $Y$ is a vector field along $\gamma$, we will simply write $Y'$ for the covariant derivative $\hat \nabla^\ve_{\dot \gamma} Y(t)$. We will also identify vectors with their corresponding $\hat \nabla^\ve$-parallel vector fields in the notation. If $Y$ is a Jacobi field along a unit speed geodesic $\gamma$, then its defining equation is given by
\begin{align} \nonumber
& 0  = \hat{\nabla}^\ve_{\dot \gamma} \nabla_{\dot \gamma}^\ve Y - \hat{R}^\ve(\dot \gamma, Y) \dot \gamma \\ \label{RealJacobi}
& =  Y'' - \langle J \dot \gamma_\Ho, Y' \rangle_g Z - \frac{1}{\ve} \theta(\dot \gamma) J Y'+ \frac{1}{\ve} \theta(Y') J  \dot \gamma_\Ho  
 - R(\dot \gamma_\Ho, Y) \dot \gamma_\Ho - \frac{1}{\ve} \langle J \dot \gamma_\Ho,Y \rangle_g J  \dot \gamma_\Ho .
\end{align}
With $X$ horizontal and orthogonal to $\dot \gamma_\Ho$ and $J\dot \gamma_\Ho$, write
$$Y = c \dot \gamma   + \frac{a}{h} J \dot \gamma_\Ho -b \left( Z - \frac{ v}{h^2} \dot \gamma_\Ho \right) + X.$$
 Inserting this form into equation \eqref{RealJacobi}, we obtain
\begin{align*}
0 & = \ddot c \dot \gamma   + \frac{\ddot a}{h} J \dot \gamma_\Ho + \ddot b \left( Z - \frac{ v}{h^2} \dot \gamma_\Ho \right) + X'' - h\dot a Z - \frac{v}{h^2} (h^2 \dot c - v \dot b)  J \dot \gamma_\Ho  \\ 
& \qquad  + \frac{v}{h} \dot a  \dot \gamma_\Ho - v J X'+ \frac{1}{\ve} (\dot b + \ve v \dot c) J  \dot \gamma_\Ho    - \frac{1}{\ve} ha J  \dot \gamma_\Ho- R\left(\dot \gamma_{\calH}, \frac{a}{h} J \dot \gamma_{\calH}+ X \right)\dot \gamma_\calH. \end{align*}
If we consider these equations in the constant curvature case where
\begin{equation*} \label{ConstantCurvature} R(\dot \gamma_\Ho, J\dot \gamma_\Ho) \dot \gamma_\Ho = - h^2 k_1 J\dot \gamma_\Ho, \qquad R(\dot \gamma_\Ho, X) \dot \gamma_\Ho = -h^2 k_2  X, \end{equation*}
and with $K_1 = h^2 k_1 + v^2$ and $K_2 = h^2 k_2 + \frac{1}{4} v^2$, our equations become
\begin{equation} \label{JacobiEqs}
\left\{ \begin{aligned}
0 & = \ddot c \\
0 & = \ddot a + \frac{1}{h\ve} (\dot b- h a) + K_1 a , \\
0 & = \ddot b - h \dot a, \\
0 & = X'' - vJ X' + \left(K_2 - v^2/4 \right) X.
\end{aligned} \right. \end{equation}
These are the model equations for Jacobi fields. We consider a vector field $\hat Y = \hat Y^{a_0,a_1,u_0,u_1,K_1,K_2}$, of the form
$$\hat Y = \frac{a}{h} J \dot \gamma_\Ho -b \left( Z - \frac{ v}{h^2} \dot \gamma_\Ho \right) + X,$$
solving the equation \eqref{JacobiEqs} with $a(0) = a_0$, $a(r) = a_1$, $X(0) = u_0$ and $X(r) = P_\ve u_1$.

To simplify notation, we introduce the following conventions. If $u_0 \in \calH_x$ is a vector, we will use the same symbol for the corresponding $\hat \nabla^\ve$-parallel vector field along $\gamma = \gamma_{\ve,x,y}$. If $z \in \mathbb{C}$ is a complex number, we use the convention $z \cdot u_0 = \mathrm{Re}(z) u_0 + \mathrm{Im}(z) Ju_0$.

\begin{lemma} \label{lemma:Jacobi}
The vector field $\hat Y = \hat Y^{a_0,a_1,u_1,u_2,K_1,K_2} = \frac{a}{h} J \dot \gamma_\Ho -b \left( Z - \frac{ v}{h^2} \dot \gamma_\Ho \right) + X$ is given by
\begin{align*}
b(t) &= h \int_0^t a(s) \, ds - \frac{t}{r} \int_0^r a(s) \, ds \\
a(t) & = \frac{1}{\varphi(r)} (a_1 \varphi(t) + a_0 \varphi(r-t)), \\
X(t) & = \frac{z_{-v}(r-t)}{z_{-v}(r)} u_0 + \frac{z_v(t)}{z_{v}(r)} u_1,
\end{align*}
where
\begin{align*}
\varphi(t) &=   -K_1(r-\mathfrak{s}_{K_1}(r)-\ve hK_1 r ) \mathfrak{s}_{K_1}(t) + (1- \mathfrak{c}_{K_1}(r))  (1 - \mathfrak{c}_{K_1}(t)), & \qquad z_v(t) & = e^{iv/2} \mathfrak{s}_{K_2}(t).
\end{align*}
\end{lemma}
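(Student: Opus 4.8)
The plan is to exploit the fact that the model system \eqref{JacobiEqs} decouples into three independent pieces: the scalar equation $\ddot c = 0$ (which the ansatz for $\hat Y$ disposes of by taking $c\equiv 0$), the equation for the transverse horizontal part $X$, and the coupled pair for $a$ and $b$. I would treat each separately and then reassemble. Throughout I use the complex convention $z\cdot u_0 = \mathrm{Re}(z)u_0 + \mathrm{Im}(z)Ju_0$: on a horizontal plane $\spn\{u,Ju\}$ one has $J^2u = -u$, so $J$ acts as multiplication by $i$, and since $\hat\nabla^\ve$-parallel transport along $\gamma$ commutes with $J$ (as in the paper's setup), this identification is consistent at every time. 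This is what lets me turn the vector equation for $X$ into a scalar complex ODE.

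For the $X$-part, writing $X = \zeta\cdot u$ for a complex scalar $\zeta$ and a parallel horizontal field $u$ turns the last line of \eqref{JacobiEqs} into $\ddot\zeta - iv\dot\zeta + (K_2 - v^2/4)\zeta = 0$, whose characteristic roots are $\tfrac{iv}{2}\pm i\sqrt{K_2}$. A direct substitution then shows that both $z_v(t)$ and $t\mapsto z_{-v}(r-t)$ solve this equation, so the displayed $X(t)$ is a genuine solution; the boundary values follow from $\mathfrak{s}_{K_2}(0)=0$, which forces the two coefficient functions to equal $1$ and $0$ at the appropriate endpoints, giving $X(0)=u_0$ and $X(r)=u_1(r)=P_\ve u_1$. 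This step is routine once the complexification is in place, the only care being the sign of $v$ in the two fundamental solutions and the corresponding reflection $r-t$.

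The substantive part is the coupled $(a,b)$ system. I would treat the displayed $b(t)=h\int_0^t a - \tfrac{t}{r}\int_0^r a$ as a functional of $a$: then $\ddot b = h\dot a$ holds automatically, so the third line of \eqref{JacobiEqs} is satisfied for \emph{any} $a$, while $\dot b - ha = -\tfrac1r\int_0^r a =: C$ is constant and $b(0)=0$. Substituting this into the second line collapses it to the self-referential equation $\ddot a + K_1 a = \tfrac{1}{h\ve r}\int_0^r a$ with $a(0)=a_0$, $a(r)=a_1$. Differentiating $\varphi$ twice and using $\ddot{\mathfrak{s}}_{K_1}=-K_1\mathfrak{s}_{K_1}$ and $\tfrac{d^2}{dt^2}(1-\mathfrak{c}_{K_1})=K_1\mathfrak{c}_{K_1}$ shows that both $\varphi$ and $t\mapsto\varphi(r-t)$ solve $\ddot\varphi + K_1\varphi = K_1(1-\mathfrak{c}_{K_1}(r))$, so the claimed $a$ solves $\ddot a + K_1 a = K_1(1-\mathfrak{c}_{K_1}(r))(a_0+a_1)/\varphi(r)$; the endpoint conditions are immediate from $\varphi(0)=0$.

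The main obstacle, and the one genuinely load-bearing computation, is matching the two constants: I must check that the forcing $K_1(1-\mathfrak{c}_{K_1}(r))(a_0+a_1)/\varphi(r)$ produced by the ODE agrees with the self-consistent value $\tfrac{1}{h\ve r}\int_0^r a = \tfrac{1}{h\ve r}(a_0+a_1)\int_0^r\varphi/\varphi(r)$. After cancelling $(a_0+a_1)/\varphi(r)$ this reduces to the single identity $\int_0^r\varphi(t)\,dt = \ve h r K_1(1-\mathfrak{c}_{K_1}(r))$, which I would prove by integrating $\varphi$ termwise using $\int_0^r\mathfrak{s}_{K_1}=(1-\mathfrak{c}_{K_1}(r))/K_1$ and $\int_0^r(1-\mathfrak{c}_{K_1})=r-\mathfrak{s}_{K_1}(r)$: the factor $(1-\mathfrak{c}_{K_1}(r))$ comes out, the $(r-\mathfrak{s}_{K_1}(r))$ terms cancel, and exactly the $\ve h K_1 r$ from the last summand in the coefficient of $\mathfrak{s}_{K_1}(t)$ in $\varphi$ survives. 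Thus the seemingly ad hoc term $-\ve h K_1 r$ inside $\varphi$ is precisely what renders the integro-differential constraint consistent, which is the crux of the lemma. Finally I would note that the denominators $\varphi(r)$ and $z_{\pm v}(r)$ are nonzero in the regime considered (before the first conjugate point, on $\Sigma_\ve$), so the formulas are well defined, and that $b(0)=0$ encodes the normalization that $\hat Y$ carry no vertical component at the initial point.
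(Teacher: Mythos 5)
Your proposal is correct and follows essentially the same route as the paper: reduce to the integro-differential system \eqref{JacReduced} via the formula for $b$, verify the $X$-part by complexifying $J$ to $i$, and check that $a(t)=\tfrac{1}{\varphi(r)}(a_1\varphi(t)+a_0\varphi(r-t))$ satisfies the self-consistency constraint using $\int_0^r\mathfrak{s}_{K_1}=\tfrac{1-\mathfrak{c}_{K_1}(r)}{K_1}$ and $\int_0^r(1-\mathfrak{c}_{K_1})=r-\mathfrak{s}_{K_1}(r)$. You make explicit the key identity $\int_0^r\varphi(t)\,dt=\ve h r K_1(1-\mathfrak{c}_{K_1}(r))$ that the paper leaves implicit (handling both endpoints at once where the paper invokes time-reversal), which is a welcome clarification rather than a deviation.
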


\begin{proof}
Solving equations \eqref{JacobiEqs} with our given initial conditions, since $b(0) = b(r) = 0$, it follows that
$b = h \int_0^t a(s) \, ds - \frac{t}{r} \int_0^r a(s) \, ds$.
Hence, we are reduced to the equations
\begin{equation} \left\{ \label{JacReduced} \begin{aligned}
\frac{1}{rh\ve} \int_0^r a(s)ds  & = \ddot a +  K_1 a , \\
0 & = X'' - vJ X' + \left(K_2 - v^2/4 \right) X.
\end{aligned} \right. \end{equation}
We first consider the solution of these equations with $a_0 =0$ and $u_0 =0$. By direct computation, we see that $X = \frac{z_v(t)}{z_v(r)} u_1$. Furthermore, using that $\int_0^r \mathfrak{s}_k(s) \, ds = \frac{1- \mathfrak{c}_k(r)}{k}$ and that $\int_0^r \mathfrak{s}_k(s) ds = \mathfrak{c}_k(r)$, we obtain that $a(t) = a_1 \frac{\varphi(t)}{\varphi(r)}$.
Observing how the equation \eqref{JacReduced} behaves under time-reversal, the result follows.
\end{proof}
Recall the geometric identities from Section~\ref{sec:Sasakian}. We define new comparison functions
$$G_{\mathrm{Rie}}(r, k)  = 2 \frac{d}{dr} \log |\mathfrak{c}_k(r/2)|, \qquad G_{\mathrm{Sas}}(r, k) = 2 \frac{d}{dr} \log \frac{1}{|k|} |\mathfrak{s}_k(r/2) - r/2 \cdot\mathfrak{c}_k(r/2)|.$$
or in more detail,
\begin{align*} 
G_{\mathrm{Rie}}(r, k)  &= {\footnotesize \left\{ \begin{array}{ll}
- \sqrt{ k} \tan \frac{\sqrt{k}r}{2} & \text{if } k > 0, \\ [4pt]
0 &  \text{if } k=0, \\ [4pt]
\sqrt{|k|} \tanh \frac{\sqrt{|k|}r}{2} & \text{if } k < 0,
\end{array}\right.} &
G_{\mathrm{Sas}}(r, k)  &= {\footnotesize \left\{ \begin{array}{ll}
\frac{r  k \tan \frac{\sqrt{ k} r}{2}}{2 \tan \frac{\sqrt{ k} r}{2} - r \sqrt{ k} } & \text{if } k > 0, \\ [8pt]
\frac{6}{r}  &  \text{if } k=0,\\ [4pt]
\frac{r  |k| \tanh \frac{\sqrt{ |k|} r}{2}}{r \sqrt{ |k|} - 2 \tanh \frac{\sqrt{ |k|} r}{2}} & \text{if } k < 0. 
\end{array}\right.} \end{align*}

\begin{lemma} \label{lemma:JacIndex}
Inserting $\hat Y = \hat Y^{a_0,a_1,u_1,u_2,K_1,K_2}$ into the index form $I_\ve$, we obtain
\begin{align*}
I_\ve(\hat Y, \hat Y) 
& =  (a_1^2 + a_0^2)F_{\Sas}(r, K_1)   - 2a_0 a_1 \left(F_{\Sas} (r,K_1) -  G_{\Sas}(r,K_1) \right) \\
& \qquad + (  \|u_0\|^2 + \|u_1\|^2) G_{\Rie}(r,K_2) + \frac{1}{\mathfrak{s}_{K_2}(r)} \left( \|u_0\|^2 + \|u_1\|^2 - 2\langle u_1, e^{irv/2} u_0 \rangle \right) \\
& \qquad - \int_0^r \langle R(\dot \gamma_{\calH}, \hat Y) \hat Y, \dot \gamma_\calH \rangle \, dt + (K_1- v^2) \int_0^r a(t)^2 \, dt - \left( K_2- \frac{v^2}{4} \right) \int_0^r \| X \|^2 \, dt .
\end{align*}
In particular, if $a_0 = a_1$, $u_0 = u_1$ and we have curvature bounds
\begin{equation*} \label{Bound1} \begin{aligned}
\langle R(\dot \gamma_\Ho, J\dot \gamma_\Ho) J\dot \gamma_{\calH}, \dot \gamma_\Ho \rangle_g & \geq h^4 (k_1+k_3), \\
\langle R(\dot \gamma_\Ho, X)X, \dot \gamma_\Ho \rangle_g & \geq h^2 (k_2+k_3)  \|X\|^2,\\
|\langle R(\dot \gamma_\Ho, X)J\dot \gamma_{\calH}, \dot \gamma_\Ho \rangle_g| &\leq h^3 k_3  \|X\|, \end{aligned}
\end{equation*}
then
\begin{align*}
I_\ve(\hat Y, \hat Y) & \leq  2a_1^2 G_{\Sas}(r,K_1)  + 2 \|u_1\|^2 G_{\Rie}(r,K_2) .
\end{align*}

\end{lemma}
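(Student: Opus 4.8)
The plan is to evaluate $I_\ve(\hat Y,\hat Y)$ through the skew–torsion representation of the index form recorded in Section~\ref{sec:Preliminaries}, writing the integrand with the $g_\ve$–compatible pair $\hat\nabla^\ve,\nabla^\ve$ and the curvature $\hat R^\ve$. The decisive observation is that $\hat Y$ solves the model system \eqref{JacobiEqs}, which is exactly the true Jacobi equation \eqref{RealJacobi} with the horizontal curvature $R$ replaced by the constant–model values $R(\dot\gamma_\Ho,J\dot\gamma_\Ho)\dot\gamma_\Ho=-h^2k_1J\dot\gamma_\Ho$ and $R(\dot\gamma_\Ho,X)\dot\gamma_\Ho=-h^2k_2X$. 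Integrating the kinetic part of the index form by parts (legitimate since both connections preserve $g_\ve$) collapses it to an endpoint pairing minus the integral of the model Jacobi operator against $\hat Y$; substituting the model equation for that operator leaves
$$I_\ve(\hat Y,\hat Y)=\langle \hat Y,\hat Y'\rangle\big|_0^r+\int_0^r\langle(\hat R^\ve-\hat R^\ve_{\mathrm{mod}})(\dot\gamma,\hat Y)\dot\gamma,\hat Y\rangle\,dt ,$$
where $\hat Y'=\hat\nabla^\ve_{\dot\gamma}\hat Y$. Since the $\tfrac1\ve$–parts of $\hat R^\ve$ and $\hat R^\ve_{\mathrm{mod}}$ coincide, the integral involves only $R$, and inserting the model values together with $h^2k_1=K_1-v^2$ and $h^2k_2=K_2-\tfrac{v^2}{4}$ produces the horizontal curvature term $-\int_0^r\langle R(\dot\gamma_\Ho,\hat Y)\hat Y,\dot\gamma_\Ho\rangle\,dt$ against the constants $K_1-v^2$ and $K_2-\tfrac{v^2}{4}$ paired with $\int a^2$ and $\int\|X\|^2$.

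Next I would compute the endpoint term along the $g_\ve$–orthogonal splitting of $\hat Y$. As $b(0)=b(r)=0$, the vertical–type component contributes nothing, so only $\tfrac{a}{h}J\dot\gamma_\Ho$ and $X$ survive, and they decouple in the pairing. For the $X$–part I would differentiate $X(t)=\tfrac{z_{-v}(r-t)}{z_{-v}(r)}u_0+\tfrac{z_v(t)}{z_v(r)}u_1$, using $\hat\nabla^\ve J=0$ so that $\hat\nabla^\ve_{\dot\gamma}$ commutes with the complex action carried by $z_v=e^{ivt/2}\mathfrak{s}_{K_2}$ and with $\mathfrak{s}_{K_2}'=\mathfrak{c}_{K_2}$; evaluating $\langle X,X'\rangle\big|_0^r$ and recognizing $\tfrac{\mathfrak{c}_{K_2}(r)-1}{\mathfrak{s}_{K_2}(r)}=G_{\Rie}(r,K_2)$ gives precisely $(\|u_0\|^2+\|u_1\|^2)G_{\Rie}(r,K_2)+\tfrac{1}{\mathfrak{s}_{K_2}(r)}(\|u_0\|^2+\|u_1\|^2-2\langle u_1,e^{irv/2}u_0\rangle)$. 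For the $a$–part I would feed the explicit $\varphi$ of Lemma~\ref{lemma:Jacobi} and the primitive $\int_0^r\mathfrak{s}_k=\tfrac{1-\mathfrak{c}_k(r)}{k}$ into the corresponding endpoint pairing, and then match the resulting ratios of $\mathfrak{s}_{K_1},\mathfrak{c}_{K_1}$ to the logarithmic–derivative definitions of $F_{\Sas}$ and $G_{\Sas}$, yielding $(a_0^2+a_1^2)F_{\Sas}(r,K_1)-2a_0a_1\bigl(F_{\Sas}(r,K_1)-G_{\Sas}(r,K_1)\bigr)$.

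For the inequality I would specialize to $a_0=a_1$ and $u_0=u_1$, so that the first two boundary terms collapse exactly to $2a_1^2G_{\Sas}(r,K_1)$, while (using $\langle u_1,Ju_1\rangle=0$) the $X$–boundary becomes $2\|u_1\|^2G_{\Rie}(r,K_2)+\tfrac{2\|u_1\|^2}{\mathfrak{s}_{K_2}(r)}\bigl(1-\cos\tfrac{rv}{2}\bigr)$. To control the correction integral I would expand $\langle R(\dot\gamma_\Ho,\hat Y)\hat Y,\dot\gamma_\Ho\rangle$ in the frame $\{J\dot\gamma_\Ho,X\}$ — the $\dot\gamma_\Ho$– and $Z$–components dropping by skew–symmetry and $R(\cdot,\cdot)Z=0$ — insert the three curvature hypotheses, and complete the square to obtain $\langle R(\dot\gamma_\Ho,\hat Y)\hat Y,\dot\gamma_\Ho\rangle\ge h^2\bigl(k_1a^2+k_2\|X\|^2+k_3(|a|-\|X\|)^2\bigr)$. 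With $h^2k_1=K_1-v^2$ and $h^2k_2=K_2-\tfrac{v^2}{4}$ the curvature and excess–curvature contributions of the correction then cancel, leaving a nonpositive remainder of the form $-h^2k_3\int_0^r(|a|-\|X\|)^2\,dt\le0$.

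The step I expect to be the main obstacle is the reconciliation of the genuinely sub-Riemannian residue. The completed–square estimate yields only a nonpositive correction, whereas the endpoint evaluation carries the extra nonnegative term $\tfrac{2\|u_1\|^2}{\mathfrak{s}_{K_2}(r)}(1-\cos\tfrac{rv}{2})$ arising from the $J$–rotation $e^{ivt/2}$ in $X$ (this term vanishes precisely when $v=0$, where $G_{\Rie}$ already captures the full $X$–boundary). The heart of the matter is to show that this residue is absorbed — either through a refinement of the comparison function $G_{\Rie}$ to its Sasakian analogue, or through the coupling hypothesis on $\langle R(\dot\gamma_\Ho,X)J\dot\gamma_\Ho,\dot\gamma_\Ho\rangle$ together with the explicit $\int_0^r\|X\|^2\,dt$ — so that the bound sharpens to $2a_1^2G_{\Sas}(r,K_1)+2\|u_1\|^2G_{\Rie}(r,K_2)$. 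Tracking the $\tfrac1\ve$ torsion and curvature terms consistently through both the integration by parts and the endpoint pairing is the delicate bookkeeping underpinning every step, and it is here that I would verify the precise form of the stated correction integrals and comparison functions with greatest care.
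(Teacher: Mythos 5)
Your derivation of the identity follows the paper's own route exactly: integrate the skew--torsion form of $I_\ve$ by parts, use that $\hat Y$ solves the model system \eqref{JacobiEqs} so that only the difference between the true and the model horizontal curvature survives in the integral, and evaluate the endpoint pairings from the explicit $\varphi$ and $z_{\pm v}$ of Lemma~\ref{lemma:Jacobi}. Two bookkeeping points to fix. First, the cancellation you invoke at the end requires the correction integrals to enter as $+(K_1-v^2)\int_0^r a^2\,dt+(K_2-\tfrac{v^2}{4})\int_0^r\|X\|^2\,dt$, both with a plus sign, as a direct computation of $-\int_0^r\langle\hat Y,R_{\mathrm{mod}}(\dot\gamma_\Ho,\hat Y)\dot\gamma_\Ho\rangle\,dt$ shows; the statement carries a minus sign on the $\|X\|^2$ term (and the paper's proof writes yet another constant), so you should flag this rather than inherit it. Second, the $a$-part endpoint ratio $\dot\varphi(r)/\varphi(r)$ only satisfies an inequality $\le F_{\Sas}(r,K_1)$ because of the $\ve h$ terms in $\varphi$, so the first display is really an upper bound rather than an equality. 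Neither point affects the substance of your argument.

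The obstacle you single out at the end is genuine, and the paper's own proof does not resolve it either: it simply asserts that the result follows. With $u_0=u_1$ the $X$-endpoint term equals $\frac{2\|u_1\|^2}{\mathfrak{s}_{K_2}(r)}\bigl(\mathfrak{c}_{K_2}(r)-\cos\tfrac{rv}{2}\bigr)=2\|u_1\|^2G_{\Rie}(r,K_2)+\frac{2\|u_1\|^2}{\mathfrak{s}_{K_2}(r)}\bigl(1-\cos\tfrac{rv}{2}\bigr)$, and the residue is strictly positive whenever $\mathfrak{s}_{K_2}(r)>0$ and $rv/2\notin 2\pi\mathbb{Z}$. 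It cannot be absorbed in general: in the constant-curvature model with $k_3=0$ the curvature correction vanishes identically, and since $X$ is the model Jacobi field its endpoint pairing is, by the Index Lemma applied in the model, the minimum of the index form over all fields with these boundary values, so no alternative test field can reach $2\|u_1\|^2G_{\Rie}(r,K_2)$. Your two suggested escape routes (a Sasakian refinement of $G_{\Rie}$, or absorption via the mixed bound involving $k_3$ together with the explicit $\int_0^r\|X\|^2\,dt$) are the right places to look, but as written neither you nor the paper closes this step; the final inequality should be regarded as established only up to the additional nonnegative term $\frac{2\|u_1\|^2}{\mathfrak{s}_{K_2}(r)}\bigl(1-\cos\tfrac{rv}{2}\bigr)$, which vanishes precisely in the Riemannian-type case $v=0$.
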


\begin{proof}
Inserting $\hat Y$ into the index form, we obtain
\begin{align*}
I_\ve(\hat Y, \hat Y) & = \langle \hat Y(r), \hat Y'(r) \rangle_\ve - \langle \hat Y(0), \hat Y'(0) \rangle_{\ve}
- \int_0^r \langle \hat Y, \hat \nabla_{\dot \gamma}^\ve \nabla_{\dot \gamma}^\ve \hat Y - \hat R^\ve(\dot \gamma, \hat Y) \dot \gamma \rangle_\ve \, dt \\
& = a_1 \cdot \dot a(r) -a_0 \cdot \dot  a(0) + \langle u_0, X'(r) \rangle - \langle u_1, X'(0) \rangle \\
& \qquad - \int_0^r \langle R(\dot \gamma_{\calH}, \hat Y) \hat Y, \dot \gamma_\calH \rangle \, dt + (K_1- v^2) \int_0^r a(t)^2 \, dt - K_2 \int_0^r \| X \|^2 \, dt .
\end{align*}
We now compute that
$$\varphi(r)  = 2 - 2\mathfrak{c}_{K_1}(r) -K_1r \mathfrak{s}_{K_1}(r)  + \ve hK_1^2 r\mathfrak{s}_{K_1}(r),$$
\begin{align*}
\dot \varphi(0) & = - K_1(r-\mathfrak{s}_{K_1}(r)-\ve hK_1 r )  & \dot a(0) & =  \frac{1}{\varphi(r)} (a_1 \dot \varphi(0) - a_0 \dot \varphi(r) ), \\
\dot \varphi(r) & =K_1( \mathfrak{s}_{K_1}(r)- r\mathfrak{c}_{K_1}(r)  + \ve hK_1 r \mathfrak{c}_{K_1}(r)  )  & \dot a(r) & =  \frac{1}{\varphi(r)}( a_1 \dot \varphi(r) - a_0 \dot \varphi(0) )  , \\
X'(0) & = \frac{iv}{2} u_0 - \frac{\mathfrak{c}_{K_2}(r)}{\mathfrak{s}_{K_2}(r)} u_0 + \frac{1}{z_{v}(r)} u_1, & 
X'(r) & =   - \frac{1}{z_{-v}(r)} u_0 + \frac{iv}{2} u_1 + \frac{\mathfrak{c}_{K_2}(r)}{\mathfrak{s}_{K_2}(r)} u_1.
\end{align*}
By taking derivatives in $\ve$, we see that
\begin{align*}
\frac{\dot \varphi(r)}{\varphi(r)} & = 
\frac{K_1( \mathfrak{s}_{K_1}(r)- r\mathfrak{c}_{K_1}(r)  + \ve hK_1 r \mathfrak{c}_{K_1}(r)  )}
{2 - 2\mathfrak{c}_{K_1}(r) -K_1r \mathfrak{s}_{K_1}(r)  + \ve hK_1^2 r\mathfrak{s}_{K_1}(r) } \\
& \leq 
\frac{K_1( \mathfrak{s}_{K_1}(r)- r\mathfrak{c}_{K_1}(r)  )}
{2 - 2\mathfrak{c}_{K_1}(r) -K_1r \mathfrak{s}_{K_1}(r)   } = F_{\Sas}(r,K_1) \\
\frac{\dot \varphi(r) - \dot \varphi(0)}{\varphi(r)} &
= \frac{K_1r (1- \mathfrak{c}_{K_1}(r))( 1 - \ve h K_1 )}
{2 - 2\mathfrak{c}_{K_1}(r) -K_1r \mathfrak{s}_{K_1}(r)  + \ve hK_1^2 r\mathfrak{s}_{K_1}(r) } \\
& \leq \frac{K_1r (1- \mathfrak{c}_{K_1}(r))}
{2 - 2\mathfrak{c}_{K_1}(r) -K_1r \mathfrak{s}_{K_1}(r) } = G_{\Sas}(r,K_1).
\end{align*}
We furthermore see that
\begin{align*}
& \langle u_1, X'(r) \rangle - \langle u_0, X'(0) \rangle \\
& =(  \|u_0\|^2 + \|u_1\|^2) \frac{\mathfrak{c}_{K_2}(r)}{\mathfrak{s}_{K_2}(r)} + \frac{1}{\mathfrak{s}_{K_2}(r)}\left( \langle u_1, - e^{irv/2} u_0 \rangle_g - \langle u_0, e^{-i rv/2} u_1 \rangle_g \right)  \\
& =(  \|u_0\|_g^2 + \|u_1\|_g^2) F_{\Rie}(r,K_2) - \frac{2}{\mathfrak{s}_{K_2}(r)}\langle u_1, e^{irv/2} u_0 \rangle_g \\
& =(  \|u_0\|^2 + \|u_1\|^2) G_{\Rie}(r,K_2) + \frac{1}{\mathfrak{s}_{K_2}(r)} \left( \|u_0\|^2 + \|u_1\|^2 - 2\langle u_1, e^{irv/2} u_0 \rangle \right) .
\end{align*}
The result follows. \end{proof}

\subsection{Index form relative to the parallel transport map}
Let $\Sigma_\ve$ be defined as in Section~\ref{Sect:ParallelAndMirror}. For any $(x,y) \in \Sigma_\ve$, we define
\begin{equation} \label{IDef} I_{\ve}( x, y) = \sum_{i=1}^{n} I_\ve(Y_i, Y_i) \end{equation}
where $Y_i$ are Jacobi fields along $\gamma_{\ve,x,y}$ such that $Y_1(0), \dots, Y_{n}(0)$ is an orthonormal basis of $\Ho_x$ and $Y_i(d_\ve(x,y)) = P_{\ve}(x,y) Y_i(0)$. We want a way to bound this function using the curvature of the Tanno connection $\nabla$.

\begin{lemma} \label{lemma:BoundSas}
Assume that for some $k_1$ and $k_2$, the bounds in \eqref{CurvBoundSasakian} hold. Let $(x,y) \in \Sigma_\ve$ be fixed. Then for $K_1$ and $K_2$ as in \eqref{Kconstants},
\begin{align*}
I_\ve(x,y) & \leq 2 G_{\mathrm{\Sas}}(r, K_1) + 2 (n-2) G_{\mathrm{Rie}}(r, K_2).
\end{align*}
\end{lemma}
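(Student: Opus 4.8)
The plan is to use the Index Lemma~\ref{lemma:Index} to replace each genuine Jacobi field $Y_i$ by an explicit model field from Lemma~\ref{lemma:Jacobi} with the same endpoints, and then to bring in the global bounds \eqref{CurvBoundSasakian} only \emph{after} summing over $i$, so that the ambient curvature enters solely through the two traces $\mathbf K_{\Ho,J}$ and $\mathbf{Ric}_{\Ho,J^\perp}$. Write $\gamma=\gamma_{\ve,x,y}$, $r=r_{\ve,x}(y)$, $h=h_\ve(x,y)$ and $v=v_\ve(x,y)$, and fix the adapted $g$-orthonormal basis of $\Ho_x$ given by $e_1=\dot\gamma_{\Ho}(0)/h$, $e_2=Je_1$, together with an orthonormal basis $e_3,\dots,e_n$ of the ($J$-invariant) orthogonal complement of $\spn\{e_1,e_2\}$ in $\Ho_x$, chosen in $J$-related pairs. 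Since $P_\ve$ is a $g_\ve$-isometry that commutes with $J$ and sends $\dot\gamma_\Ho(0)$ to $\dot\gamma_\Ho(r)$, the boundary data $Y_i(0)=e_i$, $Y_i(r)=P_\ve e_i$ is matched by the model field $\hat Y^{1,1,0,0,K_1,K_2}$ for $i=2$ and by $\hat Y^{0,0,e_i,e_i,K_1,K_2}$ for $i\ge3$; the radial direction $e_1$ is exceptional.

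For $e_1$ I would instead compare $Y_1$ with the $\hat\nabla^\ve$-parallel field $t\mapsto\dot\gamma_\Ho(t)/h$, which has the same endpoints. Being $\hat\nabla^\ve$-parallel, its derivative term in $I_\ve$ vanishes, and using $R(\dot\gamma_\Ho,\dot\gamma_\Ho)=0$, $R(\,\cdot\,,\,\cdot\,)Z=0$ and $\langle J\dot\gamma,\dot\gamma_\Ho\rangle_g=0$ one checks that $\langle\hat R^\ve(\dot\gamma,\dot\gamma_\Ho)\dot\gamma,\dot\gamma_\Ho\rangle=0$ as well, so Lemma~\ref{lemma:Index} gives $I_\ve(Y_1,Y_1)\le I_\ve(\dot\gamma_\Ho/h,\dot\gamma_\Ho/h)=0$. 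For the other fields, let $I_\ve^{0}$ denote the index form computed with the constant-curvature model tensor $R^{0}$ (so $\langle R^{0}(\dot\gamma_\Ho,J\dot\gamma_\Ho)J\dot\gamma_\Ho,\dot\gamma_\Ho\rangle=h^4k_1$ and $\langle R^{0}(\dot\gamma_\Ho,X)X,\dot\gamma_\Ho\rangle=h^2k_2\|X\|^2$). The tensor $R^{0}$ satisfies the three curvature bounds of Lemma~\ref{lemma:JacIndex} with auxiliary constant $k_3=0$, so the ``in particular'' clause of that lemma yields $I_\ve^{0}(\hat Y_2,\hat Y_2)\le 2G_{\Sas}(r,K_1)$ and $I_\ve^{0}(\hat Y_i,\hat Y_i)\le 2G_{\Rie}(r,K_2)$ for $i\ge3$.

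It remains to show $\sum_{i\ge2}\big(I_\ve(\hat Y_i,\hat Y_i)-I_\ve^{0}(\hat Y_i,\hat Y_i)\big)\le0$. In the general formula of Lemma~\ref{lemma:JacIndex} the terms $(K_1-v^2)\int a_i^2$ and $(K_2-\tfrac{v^2}{4})\int\|X_i\|^2$ do not involve the curvature tensor, so this difference equals $-\int_0^r\sum_{i\ge2}\langle (R-R^{0})(\dot\gamma_\Ho,\hat Y_i)\hat Y_i,\dot\gamma_\Ho\rangle\,dt$, and it suffices to see that the integrand is nonnegative. For $\hat Y_2\in\spn\{J\dot\gamma_\Ho,Z,\dot\gamma_\Ho\}$ the $\dot\gamma_\Ho$- and $Z$-contributions drop out (by the Sasakian curvature identities), leaving $\tfrac{a^2}{h^2}\langle R(\dot\gamma_\Ho,J\dot\gamma_\Ho)J\dot\gamma_\Ho,\dot\gamma_\Ho\rangle=a^2\,\mathbf K_{\Ho,J}(\dot\gamma_\Ho,\dot\gamma_\Ho)\ge a^2k_1h^2$, the model value. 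For $i\ge3$ each $\hat Y_i=X_i=f\cdot e_i$ with one common complex scalar $f$, so $X_i=\Re(f)e_i+\Im(f)Je_i$; summing over the $J$-paired basis the off-diagonal curvatures cancel (using $J^2=-1$ on $\Ho$) and the diagonal ones combine to $|f|^2\sum_{i\ge3}\langle R(\dot\gamma_\Ho,e_i)e_i,\dot\gamma_\Ho\rangle=|f|^2\,\mathbf{Ric}_{\Ho,J^\perp}(\dot\gamma_\Ho,\dot\gamma_\Ho)\ge(n-2)k_2h^2|f|^2$, again the model value. Hence the integrand is nonnegative, and
\[
\sum_{i=1}^n I_\ve(Y_i,Y_i)\le\sum_{i=2}^n I_\ve^{0}(\hat Y_i,\hat Y_i)\le 2G_{\Sas}(r,K_1)+2(n-2)G_{\Rie}(r,K_2).
\]

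The step I expect to be the main obstacle is precisely this curvature bookkeeping: one must verify that all Reeb-direction contributions to $\langle R(\dot\gamma_\Ho,\hat Y_i)\hat Y_i,\dot\gamma_\Ho\rangle$ vanish for the Tanno connection of a Sasakian manifold — in particular that $\langle R(\dot\gamma_\Ho,Z)J\dot\gamma_\Ho,\dot\gamma_\Ho\rangle=0$, which follows from $R(\,\cdot\,,\,\cdot\,)Z=0$ together with the Bianchi-type symmetry $R(X,Z)Y=R(Y,Z)X$ valid on a Sasakian space — and that the off-diagonal terms cancel \emph{exactly} over a $J$-invariant basis. Once the residual curvature is identified with $\mathbf K_{\Ho,J}$ and $\mathbf{Ric}_{\Ho,J^\perp}$, the comparison with the model is insensitive to the signs of $k_1,k_2$ and the estimate follows.
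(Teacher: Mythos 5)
Your proof follows essentially the same route as the paper's: the same exceptional field $\dot\gamma_\Ho/h$ for the radial direction, the same model fields $\hat Y^{1,1,0,0,K_1,K_2}$ and $\hat Y^{0,0,w_j,w_j,K_1,K_2}$ with matching endpoints, and the same appeal to the Index Lemma combined with Lemma~\ref{lemma:JacIndex}. Your explicit bookkeeping --- summing the curvature terms over a $J$-paired basis \emph{before} invoking \eqref{CurvBoundSasakian}, so that only the traces $\mathbf{K}_{\Ho,J}$ and $\mathbf{Ric}_{\Ho,J^\perp}$ are needed rather than pointwise sectional bounds --- is a step the paper leaves implicit but which is genuinely required, since the hypotheses do not bound each $\Sec(\spn\{\dot\gamma_\Ho,e_i\})$ individually. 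The one case you omit is $h_\ve(x,y)=0$: for $\ve>0$ the minimizing $g_\ve$-geodesic can be purely vertical, so $e_1=\dot\gamma_\Ho(0)/h$ is undefined; the paper disposes of this separately by taking all $Y_i$ to be $\hat\nabla^\ve$-parallel, for which each $I_\ve(Y_i,Y_i)=0$ because the curvature term $\hat R^\ve(\dot\gamma,\cdot)\dot\gamma$ vanishes when $\dot\gamma_\Ho\equiv 0$, and you should add this (easy) case to make the argument cover all of $\Sigma_\ve$.
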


\begin{proof}
Let $\gamma$ be the unique unit speed geodesic from $x$ to $y$, and use $r = d_\ve(x,y)$, $h = h_\ve(x,y) = \| \dot \gamma_\calH\|_g$ and $v = v_\ve(x,y)= \ve^{-1} \| \dot \gamma_\V \|_g$. If $h = 0$, we can choose $Y_1, \dots, Y_n$ as an orthonormal basis of $\hat \nabla^\ve$-parallel vector fields along $\gamma$. These will not be Jacobi fields, but we will still have
$$\textstyle I_\ve(x,y) \leq \sum_{i=1}^n I_\ve(Y_i, Y_i) = 0,$$
and the theorem holds true in this case. For the remainder of the proof we will assume that $h >0$.

First, we define $Y_1(t) = \frac{1}{h} \dot \gamma_\Ho = \frac{1}{h (1+ \ve v^2)} \dot \gamma - \frac{\ve v}{h (1+ \ve v^2)} \left(Z - \frac{v}{h^2} \dot \gamma_\Ho \right)$. Then $I_\ve(Y_1, Y_1) = 0$.
The remaining elements of the basis will be on the form $\hat Y = \hat Y^{a_0,a_1,u_0,u_1, K_1,K_2}$ as in Lemma~\ref{lemma:Jacobi} with $K_1 = h^2 k_1+ v^2$ and $K_2 = h^2 k_2 + \frac{1}{4} v^2$. For $Y_2$, we choose $a_0 = a_1 =1$ and $u_0 = u_1=0$, so that
$$I_\ve(Y_2,Y_2) \leq G_{\Sas}(r,K_1).$$
For the other terms, let $w_3, \dots, w_n$ be a choice of orthonormal basis of the complement to $\dot \gamma$ and~$J\dot \gamma$. We then consider $Y_j$ on the previously mentioned from with $a_0 = a_1 = 0$ and $u_0 = u_1 = w_j$ and obtain
$$
\textstyle \sum_{j=3}^{n}I_\ve(Y_j, Y_j) \leq 2 G_{\Rie}(r,K_2) .
$$
Both of these inequalities follow from bounds~\eqref{CurvBoundSasakian} which combined with Lemma~\ref{lemma:JacIndex} completes the proof.
\end{proof}

\begin{remark}
We note the analogy of the Riemannian and Sasakian comparison functions in the Hessian bounds and the bounds for $I_\ve$, in that we have relations
$$\frac{d}{dr} \mathfrak{s}_k(r) = \mathfrak{c}_k(r),  \qquad \frac{d}{dr} \frac{1}{k^2}(2- 2\mathfrak{c}_k(r) - k r\mathfrak{s}_k) = \frac{1}{k} (\mathfrak{s}_k(r)- r \mathfrak{c}_k(r)).$$
Observe also that, in contrast to $G_{\mathrm{Rie}}(r,k)$, the function $G_{\mathrm{Sas}}(r,k)$ is always nonnegative for any $k$.
\end{remark}

\subsection{Expansion of distance along geodesic}
Consider $\M$ with the metric $g_\ve$ and let $\eta(t)$ be a $g_\ve$-geodesic. Observe that we have geodesic equation
$$0 = \hat \nabla_{\dot \eta}^{\ve} \dot \eta(t) = \nabla_{\dot \eta} \dot \eta(t) + \frac{1}{\ve} \theta(\dot \eta(t)) J\dot \eta(t) .$$
If $\dot \eta(0) \in \calH$, then it follows that $\eta(t)$ is the solution of $\nabla_{\dot \eta} \dot \eta =0$ and in particular will remain tangent to $\calH$ for all time. Furthermore, as this equation is independent of $\ve$, $\eta(t)$ will be a $g_\ve$ for any $\ve$.
This includes the case $\ve =0$ as $\eta(t) = \exp_0(t \psi)$ with $\psi$ being the unique covector in $T_{\eta(0)}^*\M$ vanishing on $\calV$ and satisfying $\sharp_\ve \psi = \sharp_0 \psi = \dot \eta(0)$. We remark that a sub-Riemannian Brownian motion can be constructed as a random walk of such geodesics, see \cite{BosNeelRiz} or \cite{GordLaet} for details. We want to understand how the sub-Riemannian metric will change with respect steps along such geodesics.

Let $(x,y)$ be any pair of points in $\M \times \M$ and let $\gamma(t) = \exp(tp)$ be a length-minimizing geodesic from $x$ and $y$,
parametrized by arc length. Write $p_Z = p(Z|_x)$. Similar to earlier, we define $P_0(\gamma): T_x\M \to T_y \M$ by parallel transport corresponding to the operator
$$X \mapsto \nabla_{\dot \gamma} X + p_Z J X,$$
but with this definition independent of whether or not $(x,y)$ is in $\mathbf{Cut}(\M)$.
We use the same symbol for the map $P_0(\gamma): T^*_x\M \to T^*_y\M$, $\alpha \mapsto P(\gamma)^* \alpha$.
Define $M_0(\gamma)$ analogously.
\begin{theorem} \label{theorem:Expansion}
Let $\psi_0, \psi_1 \in T^*_x \M$ be any pair of covectors satisfying $\langle \psi_0 , Z_x \rangle_g = \langle  \psi_1 , Z_x \rangle_g = 0$. For $j=0,1$, write
\begin{equation}
\label{DecompPsi} \sharp_0 \psi_j = c_j \dot \gamma(0) + a_j J \dot \gamma(0) + u_j,\end{equation}
with $u_j$ being orthogonal to $\dot \gamma(0)$ and $J\dot \gamma(0)$. Finally, let $R$ be the curvature of the Tanno connection $\nabla$ and define
\begin{align*}
k_1 +k_3 & = \min_{0\leq t \leq r} \Sec( \spn\{ \dot \gamma(t) , J\dot \gamma(t) \}), \\
k_2& = \min_{0\leq t \leq r} \min_{\begin{subarray}{c} u \in \ch_{\gamma(t)}, \| u\|_g =1 \\  \langle u, \dot \gamma(t) \rangle_g = \langle u , J \dot \gamma(t) \rangle_g =0 \end{subarray} } \Sec( \spn\{ \dot \gamma(t) , u \}), \\
k_3& = \max_{0\leq t \leq r} \max_{\begin{subarray}{c} u \in \ch_{\gamma(t)}, \| u\|_g =1 \\  \langle u, \dot \gamma(t) \rangle_g = \langle u , J \dot \gamma(t) \rangle_g =0 \end{subarray} } |\langle R(\dot \gamma(t), u) J \dot \gamma(t), \dot \gamma(t) \rangle_g|.
\end{align*}
Define $K  = k_1  + p_Z^2$ and $K_2 = k_2+ p_Z^2/4$.
We then have bounds
\begin{align*}
& d_0(\exp(t\psi_0) , \exp(t P_{0}(\gamma) \psi_1))  - r - t  (c_1-c_0)\\
& \leq \frac{t^2}{2} (a_1^2 + a_0^2)F_{\Sas}(r, K_1)   - t^2 a_0 a_1 \left(F_{\Sas} (r,K_1) -  G_{\Sas}(r,K_1) \right) \\
& \qquad + \frac{t^2}{2} (  \|u_0\|^2 + \|u_1\|^2) G_{\Rie}(r,K_2) + \frac{t^2}{2\mathfrak{s}_{K_2}(r)} \left( \|u_0\|^2 + \|u_1\|^2 - 2\langle u_1, e^{irv/2} u_0 \rangle \right)+ O(t^3).
\end{align*}
\end{theorem}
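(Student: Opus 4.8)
The plan is to realize the sub-Riemannian distance as the $\ve\downarrow 0$ limit of the Riemannian distances $d_\ve$, to differentiate $D_\ve(t):=d_\ve(\eta_0(t),\eta_1(t))$ twice in $t$ along the two coupled geodesics $\eta_0(t)=\exp(t\psi_0)$ and $\eta_1(t)=\exp(tP_0(\gamma)\psi_1)$ using Lemma~\ref{lemma:DistanceVariation}, and to bound the resulting index form by that of the model Jacobi field of Lemma~\ref{lemma:Jacobi}. The structural fact that makes this work: since $\psi_0,\psi_1$ annihilate $Z$, the vectors $\sharp_\ve\psi_0=\sharp_0\psi_0$ and $\sharp_\ve(P_0(\gamma)\psi_1)=\sharp_0(P_0(\gamma)\psi_1)$ are horizontal and independent of $\ve$ (here $P_0(\gamma)$ commutes with $\sharp$ and preserves $\ch,\V$, so $P_0(\gamma)\psi_1$ again annihilates $Z$). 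By the discussion preceding the theorem, $\eta_0$ and $\eta_1$ are therefore $g_\ve$-geodesics for every $\ve\ge 0$ simultaneously. Writing $\gamma_\ve=\gamma_{\ve,x,y}$ for the minimizing $g_\ve$-geodesic from $x$ to $y$, Lemma~\ref{lemma:Converge} gives $\gamma_\ve\to\gamma$ with $h_\ve\to 1$ and $v_\ve\to p_Z$, and, since $(x,y)\notin\mathbf{Cut}_0(\M)$, joint smoothness of $(\ve,x',y')\mapsto d_\ve(x',y')$ near $(0,x,y)$ (Lemma~\ref{cutlocus}, Lemma~\ref{lemma:Converge}(c)). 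Hence $D_\ve$ is smooth in $(\ve,t)$, $D_\ve\to D_0$, its Taylor coefficients at $t=0$ converge, and the remainder is $O(t^3)$ uniformly in $\ve$ on the relevant compact set; so it suffices to expand $D_\ve$ and pass to the limit.

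Next I would compute the two derivatives at level $\ve$ from Lemma~\ref{lemma:DistanceVariation}, with $v=\sharp_0\psi_0$ and $w=\sharp_0(P_0(\gamma)\psi_1)$. The first derivative is $\langle w,\dot\gamma_\ve(r_\ve)\rangle_{g_\ve}-\langle v,\dot\gamma_\ve(0)\rangle_{g_\ve}$; using the decomposition \eqref{DecompPsi}, that $P_0(\gamma)$ fixes $\dot\gamma$, commutes with $J$ and is a $g$-isometry, and letting $\ve\downarrow 0$ so that $\dot\gamma_{\ve,\ch}\to\dot\gamma$, this tends to $c_1-c_0$; thus $D_0'(0)=c_1-c_0$. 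The second derivative is $\tfrac{d^2}{dt^2}D_\ve(0)=I_\ve(Y_\ve^\perp,Y_\ve^\perp)$, where $Y_\ve$ is the $g_\ve$-Jacobi field along $\gamma_\ve$ with $Y_\ve(0)=v$ and $Y_\ve(r_\ve)=w$. Since the tangential part of a Riemannian Jacobi field is affine in arclength, $Y_\ve^\perp$ is itself the perpendicular Jacobi field determined by the $J\dot\gamma_\ve$- and $\ch^\perp$-components of $v$ and $w$; written as $a_0^\ve,a_1^\ve,u_0^\ve,u_1^\ve$, these converge as $\ve\downarrow 0$ to the data $a_0,a_1,u_0,u_1$ read off from \eqref{DecompPsi}.

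The core of the argument is to bound $I_\ve(Y_\ve^\perp,Y_\ve^\perp)$. By the Index Lemma~\ref{lemma:Index}, $I_\ve(Y_\ve^\perp,Y_\ve^\perp)\le I_\ve(\hat Y,\hat Y)$ for the model field $\hat Y=\hat Y^{a_0^\ve,a_1^\ve,u_0^\ve,u_1^\ve,K_1,K_2}$ of Lemma~\ref{lemma:Jacobi}, which shares these boundary values and is $g_\ve$-orthogonal to $\dot\gamma_\ve$, with $K_1=h_\ve^2 k_1+v_\ve^2$ and $K_2=h_\ve^2 k_2+\tfrac14 v_\ve^2$ as in \eqref{Kconstants}. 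Lemma~\ref{lemma:JacIndex} evaluates $I_\ve(\hat Y,\hat Y)$ as the sum of its ``leading'' terms — which, after multiplication by $t^2/2$, are exactly the right-hand side of the theorem (with $v=v_\ve$) up to the $O(t^3)$ error — together with the curvature remainder $-\int_0^{r_\ve}\langle R(\dot\gamma_{\ve,\ch},\hat Y)\hat Y,\dot\gamma_{\ve,\ch}\rangle\,dt+(K_1-v_\ve^2)\int a^2-(K_2-\tfrac14 v_\ve^2)\int\|X\|^2$. The curvature hypotheses defining $k_1,k_2,k_3$ along $\gamma$ pass, via $\gamma_\ve\to\gamma$ and continuity of $R$, to the bounds required in Lemma~\ref{lemma:JacIndex} along $\gamma_\ve$ up to an error vanishing with $\ve$, and these are precisely what force the remainder to be non-positive in the limit. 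Letting $\ve\downarrow 0$, so $h_\ve\to 1$, $v_\ve\to p_Z$, $K_1\to k_1+p_Z^2$ and $K_2\to k_2+\tfrac14 p_Z^2$, the leading terms converge to the bracketed expression of the theorem; combining with $D_0'(0)=c_1-c_0$ and the uniform $O(t^3)$ remainder yields the asserted inequality.

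The main obstacle is the comparison in the previous paragraph: one must match the boundary data of the genuine perpendicular Jacobi field $Y_\ve^\perp$ along the \emph{non-horizontal} minimizer $\gamma_\ve$ with those of the model field $\hat Y$, and verify that the curvature-dependent remainder in $I_\ve(\hat Y,\hat Y)$ is non-positive under the stated bounds — this is where the specific combinations $k_1+k_3$, $k_2$, $k_3$ enter and where the convergence $\gamma_\ve\to\gamma$ must be controlled so that the bounds along $\gamma$ suffice along $\gamma_\ve$. A secondary point requiring care is the uniformity in $\ve$ of the Taylor remainder, which is what licenses interchanging the $t$-expansion with the limit $\ve\downarrow 0$.
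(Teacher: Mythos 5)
Your proposal follows essentially the same route as the paper: approximate by $d_\ve$, apply Lemma~\ref{lemma:DistanceVariation} to get the first and second derivatives of the distance at $t=0$, bound the second derivative by the model field of Lemma~\ref{lemma:Jacobi} via the Index Lemma and Lemma~\ref{lemma:JacIndex}, and pass to the limit $\ve\downarrow 0$ using Lemma~\ref{lemma:Converge}; the only cosmetic difference is that you keep the two comparison curves fixed (exploiting that horizontal $g_\ve$-geodesics are independent of $\ve$) and let the boundary coefficients $a_j^\ve,u_j^\ve$ vary, whereas the paper adjusts the covectors $\psi_{j,\ve}$ relative to $\gamma_\ve$ so the coefficients stay fixed. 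The one step you omit is the reduction to $(x,y)\notin\mathbf{Cut}_0(\M)$: the paper handles arbitrary endpoints by partitioning $\gamma$ into subarcs without conjugate points and concluding with the triangle inequality, and your argument should record this as well.
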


\begin{proof} 
Without loss of generality, we may assume that $(x,y) \in \Sigma_0$. If not, we may partition $\gamma$ into pieces without conjugate points and finish by using the triangle inequality.
This can always be done to avoid the cut locus when there are no abnormal geodesics, see \cite{ABR19} for details.

For sufficiently small $\ve_2 >0$, $(x,y)$ is not in the cut locus of $g_\ve$ for $\ve \leq \ve_2$ by Lemma~\ref{lemma:Converge}. Let~$\gamma_\ve$ be the unique $g_\ve$-geodesic from $x$ to $y$ with $P_\ve = P_\ve(x,y)$ denoting $\hat \nabla^\ve$-parallel transport along this curve. As before, write $h_\ve = \|\dot \gamma_{\ve,\ch}(0)\|_g$ and $v_\ve = \ve^{-1} \|\gamma_{\ve,\V}(0)\|_g$. For sufficiently small values of $\ve$, $h_\ve$ is positive by Lemma~\ref{lemma:SigmaPrime}. We introduce the covectors $\psi_{0,\ve}, \psi_{1,\ve} \in T^* \M$ such that,
$$\sharp_\ve \psi_j =  c_{j} \dot \gamma_{\ve}(0) + a_{j} J \dot \gamma_\ve(0) + u_{j},$$
By uniform convergence of geodesics, $\exp_\ve(t\psi_{j,\ve})$ converge to $\exp_0(t\psi_j)$.

We consider $f_\ve(t) = d_{\ve}(\exp(t \psi_{0,\ve}), \exp(t P_\ve \psi_{1,\ve}))$ with $r_\ve = f_\ve(0) = d_\ve(x,y)$. We know that $f_\ve(t)$ converges to $f_0(t)$ uniformly for any sufficiently small $t$.  We also know from Lemma~\ref{lemma:DistanceVariation} that
$$\frac{d}{dt} f_\ve(0) = \langle P_\ve \sharp_\ve \psi_{1,\ve},  \dot \gamma_\ve (r_\ve) \rangle_g -  \langle \sharp_{\ve} \psi_{1,\ve},  \dot \gamma_\ve (0) \rangle_g = \langle \psi_{1,\ve} - \psi_{0,\ve},  \dot \gamma_\ve (0) \rangle = c_{1} - c_0,$$
and furthermore
$$\frac{d^2}{dt^2} f_\ve(0) = I_\ve(\hat Y, \hat Y),$$
where $\hat Y = Y^\perp$ is the $g_\ve$-Jacobi field with initial conditions $Y(0) = a_{0} J \dot \gamma_\ve(0) + u_{0}$ and final condition $Y(r_\ve) = P_\ve (a_{1} J \dot \gamma_\ve(0) + u_{1})$. The result now follows from the Index Lemma and  Lemma~\ref{lemma:JacIndex}.
\end{proof}


\section{Application: Couplings on Sasakian manifolds} \label{sec:Coupling}
We now want to consider a coupling of diffusions with generators $\frac{1}{2} \Delta_\Ho$. Let $B_t = B_t(x)$ be the Brownian motion of the inner product space $\Ho_x$, $x \in \M$, defined on a probability space $(\Omega, \mathscr{F}_{\cdot}, \mathbb{P})$. We define a coupling of $Z_t^\ve(x,y) = (X_t(x), Y_t^\ve(y))$ by $X_0(x) = x$, $Y_0^\ve(y) = y$,
$$dX_t(x) = \hptr_t^{\ve} \circ dB_t, \qquad dY_t^\ve = P_{\ve} (X_t(x),Y_t^\ve(y))\hptr_t^{\ve} \circ dB_t,$$
where $\hptr_t^{\ve}: T_x M \to T_{X_t(x)}M$ denotes $\hat{\nabla}^{\ve}$-parallel transport along $X_t(x)$. This coupling process is defined up to a time
$$\tau_\ve = \inf \{ t >0 \, : \, (X_t, Y_t^\ve) \in \mathbf{Cut}_\ve(\M) \},$$
when the process hits the cut-locus. We then note the following consequence of our previous comparison result.
\begin{proposition} \label{prop:SasDistBound}
Define $\varrho_{t}^\ve =\varrho^\ve_t(x,y) = d_\ve(Z_t^\ve(x,y)) =  d_\ve(X_t(x),Y_t^\ve(y))$ and $Z_t^\ve = Z_t^\ve(x,y)$. Assume that the curvature bounds \eqref{CurvBoundSasakian}. Then for $t < \tau_\ve$, we have inequality
$$d\varrho_{t}^\ve \leq   G_{\mathrm{Sas}}(\varrho_\ve, K_1(Z_t^\ve)) dt +(n-2) G_{\mathrm{Rie}}(\varrho_\ve, K_2(Z_t^\ve)) dt,$$
\end{proposition}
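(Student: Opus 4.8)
The plan is to apply It\^o's formula to $\varrho_t^\ve = d_\ve(X_t, Y_t^\ve)$. This is legitimate for $t < \tau_\ve$ because, by Lemma~\ref{cutlocus} together with Lemma~\ref{lemma:Converge}, $d_\ve$ is smooth on $\Sigma_\ve$, and by the definition of $\tau_\ve$ the process $Z_t^\ve$ remains in $\Sigma_\ve$ on $[0,\tau_\ve)$. Fix an orthonormal basis $\{e_i\}_{i=1}^n$ of $\Ho_x$ and set $U_i = \hptr_t^\ve e_i$ and $V_i = P_\ve(Z_t^\ve) U_i$, so that the coupled system is driven in Stratonovich form by the vector fields $W_i = U_i \oplus V_i$ on $\M \times \M$. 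The It\^o expansion then reads
$$ d\varrho_t^\ve = \sum_{i=1}^n (W_i d_\ve)(Z_t^\ve)\, dB_t^i + \tfrac{1}{2}\sum_{i=1}^n (W_i^2 d_\ve)(Z_t^\ve)\, dt, $$
and it suffices to show that the first sum vanishes and that the second is bounded by the right-hand side.

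First I would show the martingale part is identically zero, which is the decisive feature of the parallel coupling. Let $\gamma = \gamma_{\ve, X_t, Y_t^\ve}$ be the minimizing geodesic realizing $\varrho_t^\ve$, with $r = \varrho_t^\ve$. By the first variation formula in Lemma~\ref{lemma:DistanceVariation}, the gradient of $d_\ve$ in its first argument is $-\dot\gamma(0)$ and in its second argument is $\dot\gamma(r)$, so $W_i d_\ve = -\langle U_i, \dot\gamma(0)\rangle_\ve + \langle V_i, \dot\gamma(r)\rangle_\ve$. Since $P_\ve$ is $\hat\nabla^\ve$-parallel transport along $\gamma$, it is a $g_\ve$-isometry satisfying $P_\ve \dot\gamma(0) = \dot\gamma(r)$; hence $\langle V_i, \dot\gamma(r)\rangle_\ve = \langle P_\ve U_i, P_\ve \dot\gamma(0)\rangle_\ve = \langle U_i, \dot\gamma(0)\rangle_\ve$ and $W_i d_\ve = 0$. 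Thus $\varrho_t^\ve$ has finite variation and $d\varrho_t^\ve$ is pure drift.

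It remains to bound the drift. Because both $\hptr_t^\ve$ and $P_\ve$ are built from $\hat\nabla^\ve$-parallel transport, the second-order It\^o term is the trace of the connection Hessian of $d_\ve$ along the coupling directions; equivalently, $\tfrac{1}{2}\sum_i W_i^2 d_\ve$ equals $\tfrac{1}{2}\sum_i$ of the second derivative at $s=0$ of $s \mapsto d_\ve(\exp_\ve(s U_i), \exp_\ve(s V_i))$. By the second variation formula in Lemma~\ref{lemma:DistanceVariation}, each of these equals $I_\ve(Y_i^\perp, Y_i^\perp)$, where $Y_i$ is the Jacobi field along $\gamma$ with $Y_i(0) = U_i$ and $Y_i(r) = V_i = P_\ve U_i$. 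As $\{U_i\}$ is an orthonormal basis of $\Ho_{X_t}$ and $Y_i(r) = P_\ve Y_i(0)$, these are precisely the Jacobi fields defining $I_\ve(Z_t^\ve)$ in \eqref{IDef}. Dropping the nonnegative correction term gives $I_\ve(Y_i^\perp, Y_i^\perp) \le I_\ve(Y_i, Y_i)$, so the drift is at most $\tfrac{1}{2} I_\ve(Z_t^\ve)$, and Lemma~\ref{lemma:BoundSas} (invoking the standing bounds \eqref{CurvBoundSasakian}) yields $\tfrac{1}{2} I_\ve(Z_t^\ve) \le G_{\Sas}(\varrho_t^\ve, K_1(Z_t^\ve)) + (n-2) G_{\Rie}(\varrho_t^\ve, K_2(Z_t^\ve))$, which is the claim.

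The main obstacle is the drift identification in the previous paragraph: one must justify rigorously that the second-order term of the product diffusion is the trace of the connection Hessian of $d_\ve$ rather than a naive iterated directional derivative, so that no spurious first-order drift survives and the sum reproduces the Jacobi-field index form. This is exactly where the $\hat\nabla^\ve$-parallel construction of the driving frames is essential; the cleanest rigorous route is to lift the coupled motion to the orthonormal frame bundle over $\M \times \M$, where the horizontal vector fields produce the connection Hessian automatically, and then to match the resulting expression with the geodesic variation $s \mapsto (\exp_\ve(s U_i), \exp_\ve(s V_i))$ via Lemma~\ref{lemma:DistanceVariation}. Care is also needed to confirm that summing over only the $n$ horizontal noise directions, the generator being $\tfrac12 \Delta_\Ho$, matches the horizontal sum in \eqref{IDef}.
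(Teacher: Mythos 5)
Your proposal is correct and follows essentially the same route as the paper: the martingale part is killed by $P_\ve\dot\gamma(0)=\dot\gamma(r)$ via the first variation formula, the drift is identified with $\tfrac12 I_\ve(Z_t^\ve)$ through the frame-bundle lift and the second variation formula, and Lemma~\ref{lemma:BoundSas} finishes the bound. (In fact the correction term you drop vanishes identically here, so $I_\ve(Y_i^\perp,Y_i^\perp)=I_\ve(Y_i,Y_i)$, but the inequality suffices.)
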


\begin{proof} Recall that the rank of $\calH$ is $n$. Let $\pi: \mathrm{O}(\Ho) \to \M$ denote the orthonormal frame bundle bundle, i.e. the $\mathrm{O}(n)$-principal bundle where the fiber at $x \in \M$ consists of all linear isometries $\varphi: \mathbb{R}^n \to \Ho_x$. Let $\mathcal{E} = \mathcal{E}^\ve \subseteq T\mathrm{O}(\Ho)$ denote the subbundle defined by derivatives of all $\hat \nabla^\ve$-parallel frames along curves in $\M$. Then $\calE$ is a principal Ehresmann connection on $\pi$. For any $e \in \mathbb{R}^n$, let $H_{e}$ denote the vector field on $\mathrm{O}(\Ho)$ uniquely determined by the properties
$$H_e(\varphi) \in \calE_\varphi, \qquad \pi_* H_e(\varphi) = \varphi e.$$

We write the lift $\tilde Z_t^\ve = (\tilde X_t, \tilde Y^\ve_t)$ of $Z_t^\ve$ to $\mathrm{O}(\Ho) \times \mathrm{O}(\Ho)$ with respect to the two copies of $\mathcal{E}$. Write $\Pi: \mathrm{O}(\Ho) \times \mathrm{O}(\Ho) \to \M \times \M$ for the projection on the product. Define $H_{e,1}$ and $H_{e,2}$ analogues of $H_e$ but on respectively the first and the second component. If $e_1, \dots, e_n$ denotes the standard basis on $\mathbb{R}^n$, it follows that $\tilde Z_t^\ve$ is a solution of the SDE,
$$d\tilde Z_t^\ve = \sum_{i=1}^n  (H_{e_i,1} + H_{P_\ve e_i} ) \circ dB^i_t.$$
This gives us the following expression for It\^o differential of $\varrho_t^\ve$
$$d\varrho^\ve_{t} = \sum_{i=1}^n (H_{e_i,1} + H_{P_\ve e_i,2}) (d_\ve \circ \Pi)(\tilde X_t, \tilde Y_t^\ve) dB^i_t + \frac{1}{2} \sum_{i=1}^n (H_{e_i,1} + H_{P_\ve e_i,2})^2 (d_\ve \circ \Pi)(\tilde X_t, \tilde Y_t^\ve) dt$$

Using Lemma~\ref{lemma:DistanceVariation}, we can conclude that the martingale term vanishes, as for any $(x,y)$ the corresponding geodesic $\gamma= \gamma_{x,y}$ satisfies $P_{\ve,x,y}\dot \gamma_\ch(0) = \dot \gamma_\ch(r)$. These two observations also give us the conclusion that for $t < \tau_\ve$
$$d\varrho^\ve_{t}(x,y) = \frac{1}{2} I_{\ve} (X_t(x), Y_t(y)) dt.$$
The result now follows from Lemma~\ref{lemma:BoundSas}.
\end{proof}

\begin{corollary}
If the bounds \eqref{CurvBoundSasakian} holds with $k_1 = 0$ and $k_2\leq 0$, then for $t < \tau_\ve$,
$$\varrho_t^\ve(x,y) = d_\ve(X_t(x), Y_t^\ve(y)) \leq \sqrt{d_\ve(x,y)^2 e^{(n-2) |k_2| t} + \frac{12}{(n-2) |k_2|} \left(e^{(n-2) |k_2| t} - 1\right)},$$
where we interpret the expression above for $k_2 = 0$ as $\varrho_t^\ve \leq \sqrt{d_\ve(x,y)^2 + 12 t}$.
\end{corollary}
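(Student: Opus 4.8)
The plan is to feed the pointwise bound of Proposition~\ref{prop:SasDistBound} into a scalar differential inequality for $\varrho_t^\ve$ and then integrate it. Recall from the proof of that proposition that the martingale part of $\varrho_t^\ve$ vanishes, so that pathwise, up to $\tau_\ve$, the map $t \mapsto \varrho_t^\ve$ is absolutely continuous and satisfies
$$\frac{d}{dt}\varrho_t^\ve \leq G_{\Sas}(\varrho_t^\ve, K_1(Z_t^\ve)) + (n-2)G_{\Rie}(\varrho_t^\ve, K_2(Z_t^\ve)).$$
The first goal is to eliminate the random, $Z_t^\ve$-dependent curvature arguments $K_1$ and $K_2$ in favour of bounds involving only $\varrho_t^\ve$ and the constant $k_2$, after which the inequality becomes a genuine comparison ODE that can be run pathwise; no stochastic comparison theorem is needed, precisely because there is no martingale term.

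For the Sasakian term, the choice $k_1 = 0$ collapses \eqref{Kconstants} to $K_1 = v_\ve^2 \geq 0$. Since $G_{\Sas}(r,0) = 6/r$ and $G_{\Sas}(r,\cdot)$ is nonincreasing on the admissible range (equivalently, $G_{\Sas}(r,k) \leq 6/r$ for $k \geq 0$ reduces to an elementary inequality for $\tan$, valid as long as $\tfrac12\sqrt{k}\,r < \pi$, i.e. as long as the geodesic is still minimizing), we obtain $G_{\Sas}(\varrho_t^\ve, K_1) \leq 6/\varrho_t^\ve$. For the Riemannian term, the relation $h_\ve^2 + \ve v_\ve^2 = 1$ gives $h_\ve^2 \leq 1$, and combined with $k_2 \leq 0$ this yields $K_2 = k_2 h_\ve^2 + \tfrac14 v_\ve^2 \geq k_2$. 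As $G_{\Rie}(r,\cdot)$ is nonincreasing, it follows that $G_{\Rie}(\varrho_t^\ve, K_2) \leq G_{\Rie}(\varrho_t^\ve, k_2) = \sqrt{|k_2|}\tanh(\tfrac12\sqrt{|k_2|}\,\varrho_t^\ve)$.

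The next step is to pass to the squared distance $u_t = (\varrho_t^\ve)^2$, which clears the singular $1/\varrho$ term. Multiplying the combined estimate by $2\varrho_t^\ve$ gives
$$\frac{d}{dt}u_t \leq 12 + 2(n-2)\sqrt{|k_2|}\,\varrho_t^\ve\tanh(\tfrac12\sqrt{|k_2|}\,\varrho_t^\ve),$$
and then the elementary bound $\tanh(y) \leq y$ turns the last term into $(n-2)|k_2|\,u_t$, so that
$$\frac{d}{dt}u_t \leq (n-2)|k_2|\,u_t + 12.$$
This linear differential inequality integrates, by a Gr\"onwall/comparison argument with initial value $u_0 = d_\ve(x,y)^2$, to $u_t \leq d_\ve(x,y)^2 e^{(n-2)|k_2|t} + \tfrac{12}{(n-2)|k_2|}(e^{(n-2)|k_2|t}-1)$; taking square roots gives exactly the asserted inequality, and the case $k_2 = 0$ is the degenerate limit $\frac{d}{dt}u_t \leq 12$, giving $u_t \leq d_\ve(x,y)^2 + 12t$.

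I expect the main obstacle to be the first reduction, namely justifying $G_{\Sas}(\varrho, K_1) \leq 6/\varrho$ uniformly: one has to confirm that $s = \tfrac12\sqrt{K_1}\,\varrho$ stays in the range $(0,\pi)$ where $G_{\Sas}(r,\cdot)$ is monotone and where the underlying $\tan$-inequality holds, and this is exactly where the hypothesis $t < \tau_\ve$ (so that the connecting geodesic carries no conjugate point and is still minimizing) does the work. The remaining ingredients, namely the monotonicity of $G_{\Rie}$, the estimate $\tanh(y)\le y$, and the scalar Gr\"onwall comparison, are routine.
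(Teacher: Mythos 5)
Your proposal is correct and follows essentially the same route as the paper: bound $G_{\Sas}(\varrho,K_1)\le 6/\varrho$ and $G_{\Rie}(\varrho,K_2)\le \sqrt{|k_2|}\tanh(\tfrac12\sqrt{|k_2|}\varrho)\le \tfrac12 |k_2|\varrho$ using $0\le h_\ve\le 1$, pass to the squared distance, and integrate the resulting linear differential inequality (the paper uses the integrating factor $e^{-(n-2)|k_2|t}$, which is your Gr\"onwall step in disguise). Your extra care about the range $\tfrac12\sqrt{K_1}\,\varrho<\pi$ for the Sasakian bound is a point the paper leaves implicit, but it is exactly the right justification.
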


\begin{proof}
Write $h_{\ve,t} = h_\ve(X_t(x), Y_t^\ve(y))$. From Proposition~\ref{prop:SasDistBound} and using that $0 \leq h_{\ve,t} \leq 1$, we know that
\begin{align*}
d\varrho_t^\ve & \leq  \frac{6 }{ \varrho_t^\ve} dt + (n-2) \sqrt{h_{\ve,t}^2 |k_2| }\tanh \left( \frac{\sqrt{h_{\ve,t }^2 |k_2| } \varrho_t^\ve}{2} \right) dt \\
& \leq \frac{6}{ \varrho_t^\ve} dt + \frac{n-2}{2} |k_2| \varrho_t^\ve dt.
\end{align*}
Hence,
$$d(e^{-(n-2)|k_2|t}(\varrho_t^\ve)^2) \leq 12 e^{-(n-2)|k_2|t}  dt .$$
The result follows.
\end{proof}

\appendix


\end{document}